\newtheorem{prop}{Proposition}[section]
\newtheorem{remark}{Remark}[section]
\newcommand{\vectornorm}[1]{\parallel\hspace{-.1cm}#1\hspace{-.1cm}\parallel}
\newenvironment{myindentpar}[1]
{\begin{list}{}
         {\setlength{\leftmargin}{#1}}
         \item[]
}
{\end{list}}
\title{Persistence and permanence of mass-action and power-law dynamical systems}
\author{Gheorghe Craciun\thanks{Department of Mathematics and Department of Biomolecular 			Chemistry, University of Wisconsin-Madison ({\tt craciun@math.wisc.edu}).} 
	\and Fedor Nazarov\thanks{Department of Mathematics, University of Wisconsin-Madison ({\tt 		nazarov@math.wisc.edu}).}
         \and Casian Pantea\thanks{Department of Mathematics and Department of Biomolecular 			Chemistry, University of Wisconsin-Madison ({\tt pantea@math.wisc.edu}).}}
\begin{document}

\maketitle


\begin{abstract}
Persistence and permanence are properties of dynamical systems that describe the long-term behavior of the solutions, and in particular specify whether positive solutions approach the boundary of the positive orthant. Mass-action systems (or more generally power-law systems) are very common in chemistry, biology, and engineering, and are often used to describe the dynamics in interaction networks. We prove that two-species mass-action systems derived from weakly reversible networks are both persistent and permanent, for any values of the reaction rate parameters. Moreover, we prove that a larger class of networks, called endotactic networks, also give rise to permanent systems, even if we allow the reaction rate parameters to vary in time. These results also apply to power-law systems and other nonlinear dynamical systems. In addition, ideas behind these results allow us to prove the Global Attractor Conjecture for three-species systems.
\end{abstract}

\begin{keywords}
persistence, permanence, global attractor conjecture, mass-action kinetics, power-law systems, biochemical networks, interaction networks
\end{keywords}

\pagestyle{myheadings}
\thispagestyle{plain}

\section{Introduction} 
Determining qualitative properties of solutions of dynamical systems arising from nonlinear interactions is generally a daunting task. However, a relevant mathematical theory that pertains to biochemical interactions obeying mass-action kinetics has been developed over the last 40 years, starting with the seminal work of  Fritz Horn, Roy Jackson and Martin Feinberg, \cite{cinci, patru, paispe}. Generally termed ``Chemical Reaction Network Theory", this theory establishes qualitative results that describe the surprisingly stable dynamic behavior of large classes of mass-action systems, {\em independently of the values of the reaction rate parameters} \cite{cinci}. This fact is especially useful since the exact values of the system parameters are usually unknown. 

Here we focus on the properties of {\it persistence and permanence} for mass-action systems, and the more general power-law systems. A dynamical system on $\mathbb{R}_{>0}^n$ is called persistent if no trajectory that starts in the positive orthant has an $\omega$-limit point on the boundary of $\mathbb{R}_{>0}^n$, and is called permanent if all trajectories that start in the positive orthant eventually enter a compact subset of $\mathbb{R}_{>0}^n$. Persistence and permanence are important in understanding properties of biochemical networks (e.g., will each chemical species be available indefinitely in the future), and also in ecology (e.g., will a species become extinct in an ecosystem), and in the dynamics of infectious diseases (e.g., will an infection die off, or will it infect the whole population).

In the context of biochemical networks we formulate the following conjecture:

\smallskip
\noindent
\textbf{\textsc{Persistence Conjecture. }}{\em  Any weakly reversible mass-action system is persistent.}
\smallskip

\noindent Here a {\em weakly reversible} mass-action system is one for which its directed reaction graph has the property that each of its connected components is strongly connected\footnote{See Definition \ref{def:WR}.}  \cite{cinci}. 
A version of this conjecture was first mentioned in \cite{cincijumate}\footnote{The version in \cite{cincijumate} only required that no positive trajectory converges to a boundary point; we conjecture that even more is true: no positive trajectory has an $\omega$-limit point on the boundary.}.  

In this paper we prove the Persistence Conjecture for two-species networks. Moreover, we introduce a new class of networks called endotactic networks, which contains the class of weakly reversible networks, and we show that  {\em endotactic two-species  $\kappa$-variable mass action systems are permanent} (Theorem \ref{thm:perm}). 

For two-species systems, this is a stronger statement than the Persistence Conjecture, in several ways. First, {\em permanence} here means that there is a compact set $K$ in the interior of the positive orthant such that all forward trajectories of positive initial condition end up in $K$. Therefore, our result implies both strong persistence and uniform boundedness of trajectories for two-species $\kappa$-variable mass-action systems. Second, our mass-action system is $\kappa$-{\em variable}, meaning that the reaction rate constants are allowed to vary within a compact subset of $(0,\infty)$. This makes our result applicable to a larger class of kinetics than just mass-action. Finally, the class of {\em endotactic} networks, explained in detail in section \ref{section:endo} strictly contains the class of weakly reversible networks. We conjecture that endotactic  $\kappa$-variable mass action systems are permanent for any number of species. 

On the other hand, note that the Persistence Conjecture for three or more species remains open.

The Persistence Conjecture is also strongly related to the Global Attractor Conjecture, which was first stated over 35 years ago \cite{patru}, and is one of the main open questions in Chemical Reaction Network Theory. It is concerned with the global asymptotic stability of positive equilibria for the class of ``complex-balanced" \cite{paispe,patru,cinci,cincijumate} mass-action systems. It is known that such systems admit a unique positive equilibrium $c_{\Sigma}$ within each stoichiometric compatibility class\footnote{A stoichiometric compatibility class is a minimal linear invariant subset; see Definition \ref{def:compClass}.}  $\Sigma$. Moreover, each such equilibrium  admits a strict Lyapunov function and therefore $c_{\Sigma}$ is locally asymptotically  stable with respect to ${\Sigma}$ \cite{cinci, paispe}.
However, the existence of this Lyapunov function does not guarantee that $c_{\Sigma}$ is a global attractor, which is the object of the following conjecture:
\smallskip

\noindent
\textbf{\textsc{Global Attractor Conjecture.}} {\em Given a complex-balanced mass-action system and any of its stoichiometric compatibility classes ${\Sigma}$, the positive equilibrium point $c_{\Sigma}$ is a global attractor on $int({\Sigma}).$}
\smallskip

Trajectories of complex-balanced mass-action systems must converge to the set of equilibria  \cite{sase}. This makes the Global Attractor Conjecture equivalent to showing that any complex-balanced system is {\em persistent}, i.e. no trajectory with positive initial condition gets arbitrarily close to the boundary of the positive orthant. Therefore, since a complex-balanced mass-action system is necessarily weakly reversible \cite{cinci}, a proof of the Persistence Conjecture would imply the Global Attractor Conjecture. In general, both conjectures are still open; the Persistence Conjecture has not been proved previously in any dimension, but recent  developments have been achieved towards a proof of the Global Attractor Conjecture. For instance, Anderson \cite{sapte}  and Craciun, Dickenstein, Shiu and Sturmfels \cite{opt} showed that vertices of the stoichiometric compatibility class $\Sigma$ cannot be $\omega$-limit points and that the Global Attractor Conjecture is true for a class of systems with two-dimensional stoichiometric compatibility classes. Anderson and Shiu proved that for a weakly reversible mass-action system, the trajectories that originate in the interior of $\Sigma$ are ``repelled"  away from the codimension-one faces of $\Sigma$. This allowed them to prove that the Global Attractor Conjecture is true if the stoichiometric compatibility class is two-dimensional \cite{noua}.  

\section{Definitions and notation}  

A chemical reaction network is usually given by a finite list of reactions that involve a finite set of chemical species. For example, a reaction network involving two species $A_{1}$ and $A_{2}$  is given in Figure \ref{Fig:A1A2}.

\begin{figure}[h]
\begin{center}
\includegraphics[width=5cm]{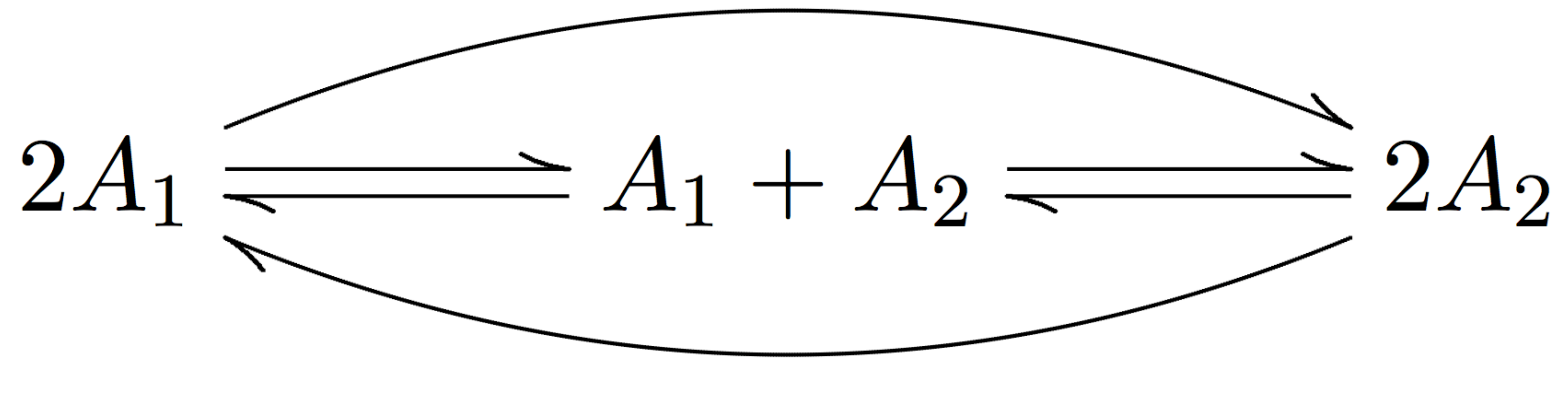}
\caption[Short]{A two-species reaction network.}\label{Fig:A1A2}
\end{center}
\end{figure}

We define the functions  $c_{A_{1}}(t)$ and $c_{A_{2}}(t)$ to be the molar concentrations of species $A_1, A_2$ at time $t$. 
The changes in the concentrations are dictated by the chemical reactions in the network; for instance, whenever the reaction $A_{1}+A_{2}\to 2A_{1}$ occurs, the net gain is a molecule of $A_{1}$, whereas one molecule of $A_{2}$ is lost.

\subsection{Chemical reaction networks} Here we recall some standard terminology of Chemical Reaction Network Theory (see \cite{cinci,paispe}). In what follows the set of nonnegative, respectively strictly positive real numbers are denoted by $\mathbb R_{\geq 0}$ and $\mathbb R_{>0}$ . For any integer $n>1$ we call $\mathbb R_{>0}^n$ the {\em positive orthant} and  $\mathbb R_{\geq 0}^n$ the {\em closed positive orthant.}
For an arbitrary finite set $I$ we denote by ${\mathbb Z}_{\geq 0}^I$ and ${\mathbb R}_{\geq 0}^I$  the set of all formal 
sums $\alpha=\displaystyle\sum_{i \in I}\alpha_ii$ where $\alpha_i$ are nonnegative integers, respectively reals. 
The \textit{support} of an element $\alpha\in \mathbb R^{I}$ is $supp(\alpha)=\{i\in I : \alpha_{i}\neq 0\}.$

\begin{definition}\label{def:CRN} 
A chemical reaction network is a triple $(\mathcal{S}, \mathcal{C}, \mathcal{R})$, where $\mathcal{S}$ is the set of chemical \textit{species},  $\mathcal{C}\subseteq \mathbb Z_{\geq 0}^{\mathcal S}$ is the set of \textit{complexes} (i.e., the objects on left or right side of the reaction arrows), and $\mathcal{R}$ is a relation on $\mathcal{C}$, denoted $P\to P'$, representing the set of \textit{reactions} in the network. Here $P$ is called the source complex and $P'$ is called the target complex of the reaction $P\to P'.$ Moreover, the set $\mathcal{R}$ must satisfy the following three conditions: it cannot contain elements of the form $P\to P$; for any $P\in \mathcal{C}$ there exists some $P'\in\mathcal{C}$ such that either $P\to P'$ or $P'\to P$; and the union of the supports of all $P\in\mathcal{C}$ is $\mathcal{S}$. 
\end{definition}

\noindent The second condition in Definition \ref{def:CRN} guarantees that each complex appears in at least one reaction, and the third condition assures that each species appears in at least one complex. For the reaction network in Figure \ref{Fig:A1A2}, the set of species is $\mathcal{S}=\{A_{1}, A_{2}\}$, the set of complexes is $\mathcal{C}=\{2A_{1}, A_{1}+A_{2}, 2A_{2} \}$ and the set of reactions is $\mathcal{R}=\{2A_{1}\rightleftharpoons A_{1}+A_{2}, A_{1}+A_{2}\rightleftharpoons 2A_{2}, 2A_{2}\rightleftharpoons 2A_{1}\}$, which consists  of 6 reactions, represented as three reversible reactions.  For convenience, we refer to a chemical reaction network by specifying $\mathcal{R}$ only, since it encompasses all the information about the network.

By a slight abuse of notation we may view all complexes as (column) vectors of dimension equal to the number of elements of $\mathcal S$, via an identification given by a fixed ordering of the species. To exemplify, the complexes in Figure \ref{Fig:A1A2} are $2A_{1}=\begin{bmatrix} 2 \\ 0 \\\end{bmatrix}$, $A_{1}+A_{2}=\begin{bmatrix} 1 \\ 1 \\\end{bmatrix}$, and $2A_{2}=\begin{bmatrix} 0 \\ 2 \\\end{bmatrix}$. 
For any reaction $P\to P'$ we may thus consider the vector $P'-P,$ called the {\em reaction vector} of $P\to P'.$ 
\begin{definition}\label{def:compClass}
The stoichiometric subspace $S$ of $\cal R$ is defined as 
$$S= span\{P'-P\mid P\to P'\in \cal R\}.$$
\end{definition}
\indent Suppose $\cal R$ has $d$ species, fix an order among them and $c(t)\in \mathbb R^{\cal S}\equiv\mathbb R^{d}$  denote the vector of molar species concentrations at time $t$.  We refer to $c(t)$ as the {\em concentration vector} of $\cal R$ at $t$. \noindent As we will see soon, for all $t\ge 0,$ the concentration vector $c(t)$ is constrained to the {\em stoichiometric compatibility class} of $c(0),$ a special polytope in $\mathbb R_{\ge 0}^d,$ defined below. 

\begin{definition}
Let $c^0\in \mathbb R^d.$ The polytope $(c^0+S)\cap \mathbb R_{\ge 0}^d$ is called the stoichiometric compatibility class of $c^0.$
\end{definition}

The reaction network $\cal R$ can be viewed as a directed graph whose vertices are given by complexes and whose edges correspond to reactions of $\cal R$. The connected components of this graph are called {\em linkage classes} of $\cal R.$ 

\begin{definition}
A reaction network $\cal R$ is called reversible if $P'\to P\in\cal R$ whenever $P\to P'\in \cal R.$
\end{definition}  

\begin{definition}\label{def:WR} 
A reaction network $\cal R$ is called weakly reversible if its associated directed graph has strongly connected components.
\end{definition}

\subsection{$\kappa$-variable mass-action systems} Throughout this paper, the differential equations that govern the evolution of the concentration vector $c(t)$ will be given by {$\kappa$-variable mass-action} kinetics. The next definition clarifies this notion; to state it, we need the following notation: given two vectors $u=\displaystyle\sum_{s\in\mathcal{S}}u_{s}s$ and $v=\displaystyle\sum_{s\in\mathcal{S}}v_{s}s$ in $\mathbb{R}^{\mathcal{S}}_{\ge 0}$, we denote $u^{v}=\displaystyle\prod_{s\in\mathcal{S}}(u_{s})^{v_{s}}$, with the convention $0^{0}=1$.
\begin{definition}\label{def:massAct}
A  $\kappa$-variable mass-action system is a quadruple $({\cal S}, {\cal C}, {\cal R}, \kappa)$ where $({\cal S}, {\cal C}, {\cal R})$ is a reaction network and $\kappa:\mathbb R_{\geq 0}\to (\eta, 1/\eta)^{\cal R}$ for some $\eta < 1$ is a piecewise differentiable function called the rate constants function.  Given the initial condition $c(0)=c^0\in \mathbb R_{\geq 0}^d,$ the concentration vector $c(t)$ is the solution of the $\kappa$-variable mass-action ODE system 
\begin{equation}\label{varMassAct}
\dot c(t) = \sum_{P\to P'}\kappa_{P\to P'}(t) c(t)^P (P'-P).
\end{equation}
\end{definition} 
\indent The $\kappa$-variable mass-action kinetics is a natural generalization of the widely used {\em mass-action kinetics}, where the rate constant functions $\kappa(\cdot)$ are simply positive constants. Therefore any results we prove for $\kappa$-variable mass-action systems are also true for mass-action systems.

Note that integration of (\ref{varMassAct}) yields 
$$c(t) = c^0 + \sum_{P\to P'} \left(\int_{0}^t \kappa_{P\to P'}(s)c(s)^P ds\right) (P'-P),$$
and therefore $c(t)\in c(0)+S$ for all times $t\geq 0.$ Moreover, it is easy to check that $c(t)\in \mathbb R^d_{\geq 0};$ indeed, all the negative terms in the expression of $\dot c_i(t)$ coming from the right hand side of (\ref{varMassAct})  contain $c_i(t)$ as a factor and vanish on the face $c_i=0$ of $\mathbb R_{\ge 0}^d.$ We conclude that  $(c^0+S)\cap \mathbb R_{\ge 0}^d$ is forward invariant for (\ref{varMassAct}). Similarly one can notice that $(c^0+S)\cap \mathbb R_{>0}^d$ is also forward invariant for (\ref{varMassAct}).

\subsection{Persistence and permanence of dynamical systems} 
We review some more vocabulary (\cite{Freedman, Schuster, LV}).

\begin{definition}\label{def:pers} A d-dimensional dynamical system is called persistent if for any forward trajectory $T(c_0)=\{c(t)=(x_1(t),\ldots, x_d(t))\mid t\geq 0\}$ with positive initial condition $c_0\in\mathbb R_{>0}^d$ we have
$$\liminf_{t\to \infty} x_i(t)>0\ for\ all\ i\in\{1,\ldots, d\}.$$
\end{definition}
\indent Note that some authors call a dynamical system that satisfies the condition in Definition \ref{def:pers} {\em strongly persistent} \cite{LV}. In  their work, persistence requires only that  $\limsup_{t\to \infty} x_i(t)>0$ for all $i\in\{1,\ldots, d\}.$ 

Going back to Definition \ref{def:pers}, persistence means that no forward trajectory with positive initial condition approaches the coordinate axes arbitrarily close. Note that  a dynamical system with bounded trajectories is persistent if the open positive quadrant is forward invariant and there are no $\omega$-limit points on $\partial\mathbb R_{\ge 0}^d;$  recall the definition of $\omega$-limit points below.

\begin{definition}
Let $T(c_0)=\{c(t)\mid t\geq 0\}$ denote a forward trajectory of a dynamical system with initial condition $c_0\in \mathbb R_{>0}^d.$ The {\em $\omega$-limit set of $T(c_0)$} is
$$\textstyle\lim_{\omega} T(c_0) = \{l\in \mathbb R^d\mid \lim_{n\to\infty}c(t_n) = l \text{ for some sequence } t_n\to\infty\}.$$
The elements of $\lim_{\omega} T(c_0)$ are called $\omega$-limit points of $T(c_0)$.
\end{definition}
\begin{definition}
A d-dimensional dynamical system is called permanent on a forward invariant set $D\subseteq \mathbb R_{\ge 0}^d$ if there exists $\epsilon > 0$ such that for any forward trajectory $T(c_0)$ with positive initial condition $c_0\in D$ we have
$$\epsilon < \liminf_{t\to \infty} x_i(t)\ and\ \limsup_{t\to \infty} x_i(t)<1/\epsilon\ for\ all\ i\in\{1,\ldots, d\}.$$
\end{definition} 
\indent In other words, a dynamical system is permanent on a forward invariant set $D\subseteq \mathbb R^d_{\ge 0}$ if there is a compact set $K\subset \mathbb R_{> 0}^d$ such that any forward trajectory with positive initial condition $c_0\in D$ ends up in $K$. 

In the case of a $\kappa$-variable mass-action system, where the trajectories are confined to subspaces of $\mathbb R^d,$ it is meaningful to require that the system is permanent {\em on each stoichiometric compatibility class}, as we do in the following definition.

\begin{definition}
A $\kappa$-variable mass-action system is called permanent if, for any stoichiometric compatibility class $\Sigma,$ the dynamical system is permanent on $\Sigma$.
\end{definition}

\noindent Clearly, a permanent $\kappa$-variable mass-action system is persistent.

\section{An illustrative example}\label{sec:example}
Before introducing endotactic networks and proving their persistence (Theorem \ref{thm:main}), we take an intermediary step and discuss an example which illustrates some key points in our strategy for the proof of Theorem \ref{thm:main}. Consider the following $\kappa$-variable mass-action system:
\begin{equation}\label{ex:Illustrative}
2X\displaystyle \mathop{\rightleftharpoons}^{k_1}_{k_{-1}}Y\quad
X\displaystyle \mathop{\rightleftharpoons}^{k_2}_{k_{-2}}Y\quad
X\displaystyle \mathop{\rightleftharpoons}^{k_3}_{k_{-3}}2X+Y
\end{equation}
\noindent where $k_i=k_i(t)\in (\eta,1/\eta)$ for all $i\in\{-3,-2,\ldots, 3\}$ and all $t\ge 0.$  Let $c(t)=(x(t),y(t)),$ where $x(t)$ and $y(t)$ denote the concentrations of species $X$ and $Y$ at time $t\ge 0.$ Then $(x(t),y(t))$ satisfy the system of differential equations (\ref{ex:ODE}):
\begin{equation}\label{ex:ODE}
\begin{bmatrix} \dot x(t) \\ \dot y(t) \\\end{bmatrix} = 
[k_1x(t)^2 - k_{-1}y(t)]\begin{bmatrix} -2 \\ 1 \\\end{bmatrix}+
[k_2x(t) - k_{-2}y(t)] \begin{bmatrix} -1 \\ 1 \\\end{bmatrix} +
[k_3x(t) - k_{-3}x(t)^2y(t)]\begin{bmatrix} 1 \\ 1 \\\end{bmatrix}
\end{equation}
Let us fix an initial condition $c(0)$ for (\ref{ex:ODE}). We want to construct a convex polygon $\cal P$ such that $conv(\cal P)$ is forward invariant for the dynamics given by (\ref{ex:ODE}) and contains $c(0),$ where $conv(\cal P)$ is the union of $\cal P$ and its interior\footnote{In this case we will simply say that $\cal P$ is forward invariant for (\ref{ex:ODE}).}. According to a theorem of Nagumo (see \cite{nagumo,Blanchini}), this is equivalent to requiring that the flow satisfy the {\em sub-tangentiality condition}, which in our case simply means that the vector $\dot c(t_0)$ points towards the interior of $\cal P$  whenever $c(t_0)\in\cal P.$ 
In example (\ref{ex:Illustrative}), the flow has three components along reaction vectors corresponding to the three pairs of reversible reactions (see (\ref{ex:ODE})). We would like to construct $\cal P$ such that {\em each component of the flow satisfies the sub-tangentiality condition}; this will clearly imply that the aggregate flow (\ref{ex:ODE}) also satisfies the sub-tangentiality condition and therefore $\cal P$ is forward invariant for (\ref{ex:ODE}). 

\begin{figure}
\begin{center}
\begin{tabular}{cc}
%
\begin{tikzpicture}[>=stealth, scale =.34]
\node at (-1,15) {(a)};
\begin{scope}
\clip (0,0) rectangle+(16,16);

\draw (0,0)--(16,0);
\draw (0,0)--(0,16);
\draw [gray](16,0)--(16,16);
\draw [gray](0,16)--(16,16);

\draw[dotted, thick, fill=blue!25!white] plot[domain = 0:25.1] (\x,.055*\x^2);
\draw[dotted, thick, fill = white] plot[domain = 0:16] (\x,.12*\x^2);

\draw[thick, ->] (.5, 15.5)--(1.5,15);
\draw[thick, ->] (15.5,.5)--(14.5,1);
\draw[thick, ->] (14,15)--(13,15.5);
\draw[thick, ->] (14,15)--(15,14.5);

\draw (4,.5)--(2,1.5)--(1,7)--(5,13)--(10,14.75)--(15.5,12) -- (14,3)--(4,.5);
\draw [blue, very thick](4,.5)--(2,1.5);
\draw [blue, very thick](15.5,12)--(10,14.75);

\fill[blue] (4,.5)circle (.15);
\fill[blue] (2,1.5) circle (.15);
\fill[black]  (1,7) circle (.15);
\fill[black]  (5,13) circle (.15);
\fill[blue]  (10,14.75)circle (.15);
\fill[blue]  (15.5,12) circle (.15);
\fill[black]  (14,3) circle (.15);

\node[rotate = 63, text = black] at (7.5,8) {$y=(1/\delta)x^2$};
\node[rotate = 52, text = black] at (11.8,7) {$y=\delta x^2$};

\draw [gray](0,16)--(16,16);

\end{scope}
\draw (0,0)--(16,0);
\draw (0,0)--(0,16);
\draw [gray](16,0)--(16,16);
\draw [gray](0,16)--(16,16);

\end{tikzpicture} 
&
%
\begin{tikzpicture}[>=stealth, scale =.34]
\node at (-1,15) {(b)};
\begin{scope}
\clip (0,0) rectangle+(16,16);

\draw (0,0)--(16,0);
\draw (0,0)--(0,16);
\draw [gray](16,0)--(16,16);
\draw [gray](0,16)--(16,16);

\draw[color=white, fill = blue!25!white] (0,0)--(16,12.8)--(16,16)--(13.333,16)--(0,0);

\draw[dotted, thick, fill=blue!25!white] plot[domain = 0:25.1] (\x,.8*\x);
\draw[dotted, thick, fill = white] plot[domain = 0:16] (\x,1.2*\x);

\draw[thick, ->] (.5, 15.5)--(1.2,14.8);
\draw[thick, ->] (15.5,.5)--(14.8,1.2);
\draw[thick, ->] (15,15)--(15.8,14.2);
\draw[thick, ->] (15,15)--(14.2,15.8);

\draw (3,.5)--(.5,3)--(2,14)--(11.5,15)--(15.5,11)--(14,4) -- (3,.5);
\draw [blue, very thick](3,.5)--(.5,3);
\draw [blue, very thick](15.5,11)--(11.5,15);

\fill[blue] (3,.5)circle (.15);
\fill[blue] (.5,3) circle (.15);
\fill[black]  (2,14) circle (.15);
\fill[blue]  (11.5,15) circle (.15);
\fill[blue]  (15.5,11)circle (.15);
\fill[black]  (14,4) circle (.15);
\node[rotate = 50, text = black] at (7.5,9.7) {$y=(1/\delta)x$};
\node[rotate = 40, text = black] at (9,6.5) {$y=\delta x$};

\draw [gray](0,16)--(16,16);

\end{scope}
\draw (0,0)--(16,0);
\draw (0,0)--(0,16);
\draw [gray](16,0)--(16,16);
\draw [gray](0,16)--(16,16);

\end{tikzpicture} 
\\
%
\begin{tikzpicture}[>=stealth, scale =.34];
\node at (-1,15) {(c)};
\begin{scope}
\clip (0,0) rectangle+(16,16);

\draw (0,0)--(16,0);
\draw (0,0)--(0,16);
\draw [gray](16,0)--(16,16);
\draw [gray](0,16)--(16,16);

\draw[dotted, thick, smooth, fill=blue!25!white] plot[domain = 0.4:16.5] (\x,25/\x);
\draw[dotted, thick, smooth, fill=white] plot[domain = 0.4:16.5] (\x,50/\x);

\draw[thick, ->] (.5,.5)--(1.3,1.3);
\draw[thick, ->] (15.5,15.5)--(14.7,14.7);
\draw[thick, ->] (6,6)--(6.8,6.8);
\draw[thick, ->] (6,6)--(5.2,5.2);

\draw (1,12.5)--(4,15.5)--(12,15)--(14,10)--(15.5,4)--(12.5,1)--(1,3)--(1,12.5);
\draw [blue, very thick](1,12.5)--(4,15.5);
\draw [blue, very thick](12.5,1)--(15.5,4);

\fill[blue] (1,12.5)circle (.15);
\fill[blue] (4,15.5) circle (.15);
\fill[black]  (12,15) circle (.15);
\fill[black]  (14,10) circle (.15);
\fill[blue]  (12.5,1)circle (.15);
\fill[blue]  (15.5,4) circle (.15);
\fill[black]  (1,3) circle (.15);

\node[rotate = -25, text = black] at (11.7,5) {$y=(1/\delta)x^{-1}$};
\node[rotate = -80, text = black] at (2.2,10) {$y=\delta x^{-1}$};

\draw [gray](0,16)--(16,16);

\end{scope}
\draw (0,0)--(16,0);
\draw (0,0)--(0,16);
\draw [gray](16,0)--(16,16);
\draw [gray](0,16)--(16,16);

\end{tikzpicture} 
&
\begin{tikzpicture}[>=stealth, scale =.34]
\node at (-1,15) {(d)};
\begin{scope}
\clip (0,0) rectangle+(16,16);

\draw (0,0)--(16,0);
\draw (0,0)--(0,16);
\draw [gray](16,0)--(16,16);
\draw [gray](0,16)--(16,16);
%

\draw [blue!25!white, fill = blue!25!white] (0,0) .. controls + (0:5cm) and + (250:9cm) .. (12.5,16) -- (14.5,16) .. controls + (242:9cm) and (0:7) ..(0,0);
\draw [blue!25!white, fill = blue!25!white] (0,0) -- (16,11) --(16,13.5)--(0,0); 
\draw [blue!25!white, fill = blue!25!white] (.2,16) .. controls + (275:15cm) and + (175:5cm) .. (16,.2)
-- (16,1.4) .. controls + (175:12cm) and + (277:8cm) .. (1,16);

\draw [thick, dotted] (0,0) .. controls + (0:5cm) and + (250:9cm) .. (12.5,16);
\draw [thick, dotted] (14.5,16) .. controls + (242:9cm) and (0:7) ..(0,0);
\draw [thick, dotted] (0,0) -- (16,11);
\draw [thick, dotted] (16,13.5)--(0,0); 
\draw [thick, dotted] (.2,16) .. controls + (275:15cm) and + (175:5cm) .. (16,.2);
\draw [thick, dotted] (16,1.4) .. controls + (175:12cm) and + (277:8cm) .. (1,16);

\draw [blue, very thick](.5,2.5)--(2,1)--(3.5,.25)--(14,.25)--(15.5,1.75)--(15.5,10)--(13.5, 12)--(10.5, 13)--(2,13)--(.5,11.5)--(.5,2.5);

\fill[blue] (.5,2.5)circle (.15);
\fill[blue] (2,1) circle (.15);
\fill[blue]  (3.5,.25) circle (.15);
\fill[blue]  (14,.25) circle (.15);
\fill[blue]  (15.5,1.75)circle (.15);
\fill[blue]  (15.5,10) circle (.15);
\fill[blue]  (13.5,12) circle (.15);
\fill[blue]  (10.5,13) circle (.15);
\fill[blue]  (2,13)circle (.15);
\fill[blue]  (.5,11.5) circle (.15);

\draw [gray](0,16)--(16,16);
\end{scope}

\draw (0,0)--(16,0);
\draw (0,0)--(0,16);
\draw [gray](16,0)--(16,16);
\draw [gray](0,16)--(16,16);

\end{tikzpicture} 
\end{tabular}
\end{center}
\caption{Invariant polygons for the reactions in example (\ref{ex:Illustrative}), taken separately ((a),(b),(c)) and together (d). Outside the shaded regions the direction of the flow component corresponding to each reaction is known, while inside each shaded region at least one such reaction is unknown.}\label{Fig:ex}
\end{figure}
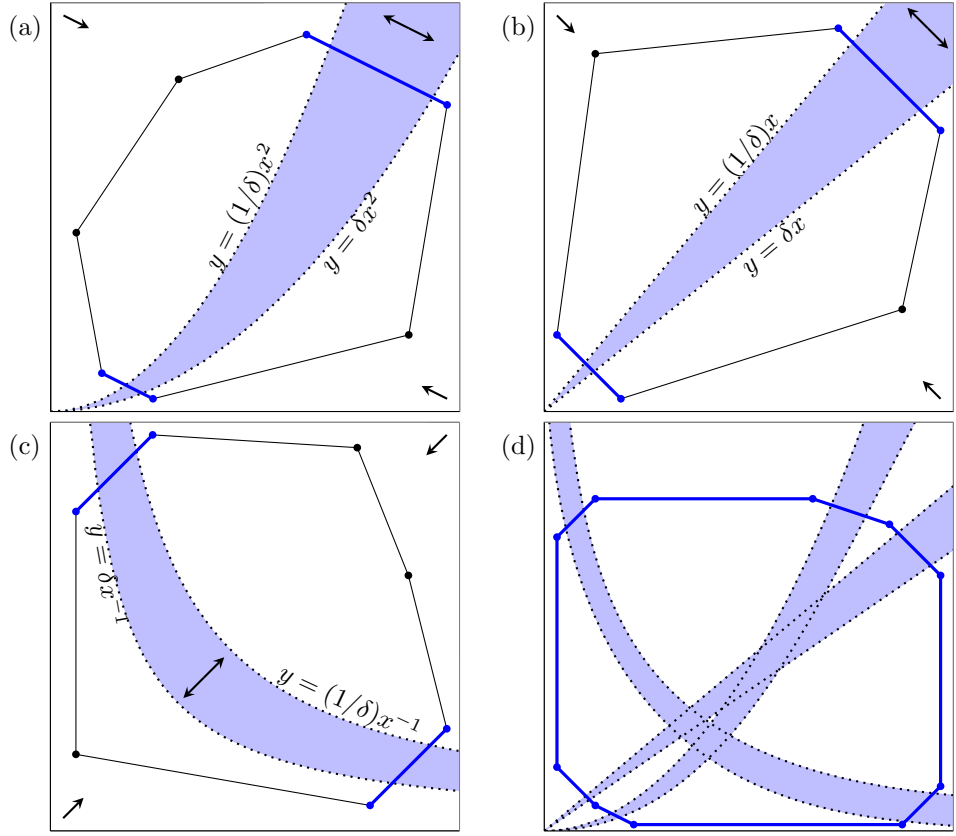

Following up on the preceeding observation, we next try to characterize the polygons that are forward invariant for one component of the flow, i.e., corresponding to a single reversible reaction. Let us fix $t$ for a moment. The first reaction of (\ref{ex:Illustrative}) drives the flow along the vector 
$\begin{bmatrix} -2 \\ 1 \\\end{bmatrix}$ with rate $k_1(t)x(t)^2 - k_{-1}(t)y(t).$ 
The sign of the rate gives the direction of the flow, 
$\begin{bmatrix} -2 \\ 1 \\\end{bmatrix}$ or $\begin{bmatrix} 2 \\ -1 \\\end{bmatrix},$ 
according to which side of the curve 
$y=(k_1(t)/k_{-1}(t))x^2$  the phase point $c(t)$ belongs to. 
The coefficient $k_1(t)/k_{-1}(t)$ is unknown, but we know it is bounded, 
$k_1(t)/k_{-1}(t)\in(\eta^2, 1/\eta^2).$ 
Therefore, (see Figure \ref{Fig:ex} (a)) the component of $\dot c(t)$ in (\ref{ex:ODE}) corresponding to the first reaction is in direction 
$\begin{bmatrix} 2 \\ -1 \\\end{bmatrix}$ if $c(t)\in \{(x,y)\in\mathbb R^2_{>0}\mid y > (1/\delta) x^2\}$ 
and in direction 
$\begin{bmatrix} -2 \\ 1 \\\end{bmatrix}$ if $c(t)\in \{(x,y)\in\mathbb R^2_{>0}\mid y < \delta x^2\},$ 
where $\delta = \eta^2.$ In the region between the curves above,
$$\Gamma = \{(x,y)\in\mathbb R^2_{>0}\mid (1/\delta) x^2 > y > \delta x^2\}$$
the flow corresponding to the first reaction can go both ways; the sub-tangentiality condition forces 
${\cal P}\cap \Gamma$ to be parallel to $\begin{bmatrix} -2 \\ 1 \\\end{bmatrix}.$ 
In other words, as in Figure \ref{Fig:ex}, 

\noindent (*) {\em $\Gamma$ cannot contain any vertices of  $\cal P$, and the two sides of $\cal P$ that intersect $\Gamma$ must  be parallel  to the reaction vector that corresponds to $\Gamma.$}

This is the only requirement that reaction $2X\displaystyle\mathop{\rightleftharpoons}^{k_1}_{k_{-1}}Y$ (or any other reversible reaction) imposes on $\cal P,$ since convexity  of $\cal P$ guarantees that sub-tangentiality is satisfied everywhere else.

Figure \ref{Fig:ex} (b), (c) illustrates the flow corresponding to the two remaining pairs of reversible reactions. We may now combine conditions (*) imposed by the three pairs of reversible reactions and obtain the polygon $\cal P$ in Figure \ref{Fig:ex} (d) that is forward invariant for (\ref{ex:ODE}).  We may realize $\cal P$ from a rectangle in the positive quadrant whose sides are parallel to to axes and whose corners are ``cut" at intersections with $\Gamma$-type regions in the direction of the corresponding reaction vector (see Figure \ref{Fig:ex} (d)). The original rectangle must be close enough to the axes and ``large" enough to contain $c(0)$ in its interior and to allow for cuts in regions where the curves $y = \delta x^{\sigma}, y = (1/\delta) x^{\sigma},$ $\sigma\in \{1, 2\}$ are ordered with respect to $\sigma.$ For instance, the cuts at the $(0,0)$ or SW corner are in a region where 
$$\delta x^2 < (1/\delta) x^2 < \delta x < (1/\delta) x,$$
whereas in the NE corner we have 
$$\delta x < (1/\delta) x < \delta x^2 < (1/\delta) x^2.$$
This arrangement of the curves avoids the ambiguity of cuts intersecting more than one $\Gamma$-type region  and is necessary for obtaining a convex $\cal P.$

We therefore conclude that the mass-action system (\ref{ex:Illustrative}) is persistent. In fact, our informal discussion can be made rigorous and extended to apply to any reversible network:
\begin{theorem}\label{thm:rev}
Any two-species reversible $\kappa$-variable mass-action system is persistent.
\end{theorem}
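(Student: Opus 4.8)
The plan is to turn the construction of the illustrative example into a general argument. Given $c(0)\in\mathbb R^2_{>0}$, I will produce a convex polygon $\mathcal P\subset\mathbb R^2_{>0}$ that contains $c(0)$ in its interior and is forward invariant for (\ref{varMassAct}). Because $\mathcal P$ is a compact subset of the open orthant, each coordinate is bounded below by a positive constant on $\mathcal P$; hence any trajectory starting at $c(0)$ stays in $\mathcal P$ and satisfies $\liminf_{t\to\infty}x_i(t)>0$, which is exactly persistence. By Nagumo's theorem, forward invariance reduces to the sub-tangentiality condition on $\partial\mathcal P$, and, as explained after (\ref{ex:ODE}), it suffices to check sub-tangentiality separately for the flow component of each reversible pair, since convexity of $\mathcal P$ then yields it for the aggregate flow.

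It is convenient to work in logarithmic coordinates $(\xi,\psi)=(\log x,\log y)$. For a reversible pair $P\rightleftharpoons P'$ with reaction vector $v=P'-P$, the net flux $\kappa_{P\to P'}c^{P}-\kappa_{P'\to P}c^{P'}$ vanishes exactly on the straight line $v\cdot(\xi,\psi)=\log(\kappa_{P\to P'}/\kappa_{P'\to P})$, whose normal is $v$. Since each rate lies in $(\eta,1/\eta)$, the right-hand side stays in $(2\log\eta,-2\log\eta)$, so the ambiguity region $\Gamma$ of the example becomes the fixed strip $\{(\xi,\psi):|v\cdot(\xi,\psi)|<-2\log\eta\}$ perpendicular to $v$. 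Outside $\Gamma$ the sign of the flux is determined, and a direct computation gives the key sign property: the flow component of this pair contributes $\mathrm{flux}\cdot(v_1^{2}/x+v_2^{2}/y)$ to $\tfrac{d}{dt}\big(v\cdot(\xi,\psi)\big)$, and since this factor is positive it always pushes $v\cdot(\xi,\psi)$ toward its equilibrium value, i.e. toward $\Gamma$. In particular, on the right edge $x=x_2$ every component with $(v)_x\neq0$ contributes a nonpositive amount to $\dot x$ whenever the phase point lies outside that component's strip, and symmetrically for the other three edges; thus outside the strips the determined flow points into any large axis-parallel box.

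With a box $R=[x_1,x_2]\times[y_1,y_2]$ chosen large and positioned so that $c(0)$ and the bounded central region where the strips overlap both lie well inside it, sub-tangentiality holds automatically on every part of $\partial R$ that avoids all strips. Where an edge meets a strip $\Gamma$, the corresponding component is ambiguous; there I invoke condition~(*) and replace the straight edge by a segment parallel to the reaction vector $v$ of that strip. Then that component is tangent to $\partial\mathcal P$ and imposes no constraint, while the remaining components still point inward by the sign property. For a sufficiently large box the strips meet each edge in pairwise disjoint intervals, and around each corner they occur in the nested order fixed by the reaction-vector exponents --- precisely the ordering $\delta x^{2}<(1/\delta)x^{2}<\delta x<(1/\delta)x$ seen near the corner at the origin in the example. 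Cutting each corner successively, parallel to the reaction vectors of the strips it meets, produces convex chains that join the long axis-parallel edges into a single convex polygon $\mathcal P\subset\mathbb R^2_{>0}$, and Nagumo's theorem then gives forward invariance.

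I expect the principal difficulty to be this final combinatorial step: proving that for a large enough box the ambiguity strips separate along each edge and arrange themselves in a consistent nested order around each corner, so that the mandatory cuts --- each forced to be parallel to a prescribed reaction vector --- fit together into a genuinely \emph{convex} polygon that stays in the open orthant. This amounts to controlling, for every pair of reaction vectors, the locations and order in which their strips exit to infinity, and checking that in each cut region exactly one reaction is ambiguous while all the others point inward. The only other point requiring care is the sign property of the individual reversible components, but this is the elementary flux computation indicated above; granting both, persistence follows at once from the compactness of $\mathcal P$ in $\mathbb R^2_{>0}$.
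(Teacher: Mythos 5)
Your strategy is the one the paper itself uses for this theorem: build a convex polygon whose sides are either ``safe'' or parallel to a reaction vector inside that reaction's ambiguity region, check sub-tangentiality component by component, and invoke Nagumo; the paper treats Theorem \ref{thm:rev} exactly this way, as a special case of Theorem \ref{thm:main}, with the polygon built in Section \ref{sec:geom}. Your flux computation and the log-coordinate description of the strips are correct. However, there is a genuine gap, and it is not confined to the ``combinatorial step'' you defer at the end: one of the steps you present as established is false. You claim that on the right edge $x=x_2$ every component with $v_1\neq 0$ contributes nonpositively to $\dot x$ whenever the phase point lies outside that component's strip, and conclude that ``outside the strips the determined flow points into any large axis-parallel box.'' Being outside a strip only determines the \emph{sign} of the flux; the component $\mathrm{flux}\cdot v$ points inward through an edge with outward normal ${\bf n}$ iff $\mathrm{flux}\,(v\cdot {\bf n})\le 0$, which additionally requires being on the \emph{correct} side of the strip. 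Take $v=(1,1)$, the pair $X\rightleftharpoons 2X+Y$ from example (\ref{ex:Illustrative}): at a point $(x_2,y)$ of the right edge with $x_2y<\eta^2$, the point is outside the strip but on the small-$c^{v}$ side, the flux is positive, and the component points in the direction $+(1,1)$, i.e.\ out of the box through the right edge. The correct requirement is that the two cuts parallel to $v$ split the boundary into one arc lying entirely on the small-$c^v$ side of the strip (where all outward normals satisfy $v\cdot{\bf n}\le 0$) and one arc entirely on the large side (where $v\cdot{\bf n}\ge 0$).

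Arranging this simultaneously for every reaction direction is the actual substance of the proof, and it is precisely what you postpone. It requires (i) observing that a strip can only be cut at the two corners compatible with the sign of the slope of its reaction vector (a $(1,1)$-parallel segment can cut off the SE or NW corner of a box but not the SW or NE corner, so the box must be chosen with its SW corner strictly below that strip); (ii) ordering the cuts at each corner consistently with the exponents, which is the nesting $\delta x^2<(1/\delta)x^2<\delta x<(1/\delta)x$ near the origin in Section \ref{sec:example} and is codified in conditions (P2), (P3), (P5) of Section \ref{sec:geom}; and (iii) verifying that each side of the resulting polygon meets at most one strip and lies on the correct side of all the others, which is the content of property (P*) and the Region I/II/III case analysis in the proof of Theorem \ref{thm:main}. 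Until these are carried out, your argument reproduces the paper's informal discussion of Section \ref{sec:example} but does not yet prove the theorem.
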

Although our discussion of Theorem \ref{thm:rev} has been informal, a rigorous proof will be given through the more general result of Theorem \ref{thm:main}.

In the next sections we extend ideas discussed here for the case of endotactic networks which we will define soon. For reversible networks, this section revealed an important ingredient in constructing $\cal P$ and proving persistence. Namely, when $c(t)\in\cal P,$ the aggregate effect of a pair of reversible reactions $P\rightleftharpoons P'$ is to push the trajectory towardss the corresponding $\Gamma$ region, therefore towardss the interior of $\cal P.$ The ``bad" reaction in $P\rightleftharpoons P',$ i.e. the one that points outside $\cal P$ is cancelled by the other, ``good reaction". Both reactions are good if $c(t)$ lies on one of the two sides parallel to $P'-P.$ Which reaction is good and which is bad comes from the sign of binomials of the form 
\begin{equation}\label{eq:binomials}
c^{P'}-C\cdot c^P,
\end{equation} 
\noindent where $C$ is some positive constant, in this case $C=\delta$ or $C=1/\delta.$ Analyzing the sign of  (\ref{eq:binomials}) for source complexes $P$ and $P'$ is a recurring theme throughout the paper. We will refer to this informally as ``comparing source monomials up to a constant."

\section{Endotactic networks}\label{section:endo} 
The results we obtained in this paper are applicable to  {\em endotactic networks,} a large class of two-dimensional reaction networks characterized by a simple geometric property. As we will see shortly, the class of endotactic networks is larger than the well-known class of weakly reversible networks. We build up to the definition of an endotactic network and begin with a few geometric notions.

Let $\cal R = (\cal S, \cal C, \cal R, \kappa)$ denote a $\kappa$-variable mass-action system with two species and let ${\cal S} = [X,\ Y]$ denote its ordered set of species.  

\begin{definition} By a slight abuse of terminology, the set of lattice points corresponding to sources of $\cal R:$
$${\cal{SC}}({\cal R}) = \{(m,n)\in \mathbb{Z}_{\geq0}^2\ such\ that\ mX+nY\in\cal C \text{ is a source complex} \}.$$
is also called the set of source complexes of $\cal R.$ The source monomial corresponding to $(m,n)\in{\cal{SC(R)}}$ is $x^my^n.$
\end{definition}
%
\begin{figure}[h]
\begin{center}
\begin{tikzpicture}[thick, >=stealth', scale = .55] 
\draw[->,thick]  (-4,3.5)--(-3,3.5);
\draw[->,thick]  (-4,3.5)--(-4.5,4.5);
\node at  (-3,3.8) {${\bf v}_1$};
\node at  (-4.8,4.4) {${\bf v}_2$};
\draw[gray] (-5,0) -- (3,4); 
\draw[<->, very thick] (-2,-1) -- (2,1); 
\draw[->, very thick] (-1,2) -- (0,4);
\draw[->, very thick] (-1,2) -- (-4,.5); 
\draw[draw = white, double=gray] (-1,-2) -- (-1,6); 
\shadedraw [shading=ball] (0,0) circle (.15cm);
\shadedraw [shading=ball] (-1,2) circle (.15cm); 
\shadedraw [] (2,1)+(.16,.09) circle (.15cm);
\shadedraw [] (-2,-1)+(-.16,-.09) circle (.15cm);
\shadedraw [] (-4,.5)+(-.16,-.09) circle (.15cm);
\shadedraw [] (0,4)+(.1,.16) circle (.15cm);
\node[rotate=28] at (2.3,4) {$esupp^{{\bf v}_2}(\cal R)$};
\node[rotate=90] at (-1.3,5.2) {$esupp^{{\bf v}_1}(\cal R)$};
\end{tikzpicture} 
\end{center}
\caption{Essential supports corresponding to vectors $\bf v_1$ and $\bf v_{2}$}\label{Fig:esupp}
\end{figure}
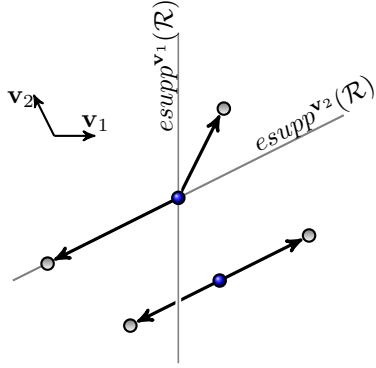

\noindent Let $\cal A$ be a set of points and ${\bf v}$ a vector in the plane. 

\begin{definition}\label{def:esup}
(i) Let $supp^{\bf v}({\cal A})_{\ge 0}$ denote the minimal half-plane which contains $\cal A$ and the positive direction of $\bf v$ and is bounded by a line perpendicular to $\bf v.$
The $\bf v$-support of $\cal A$ is the line that bounds $supp^{\bf v}({\cal A})_{\ge 0},$ 
$$supp^{\bf v}({\cal A}) = \partial supp^{\bf v}({\cal A})_{\ge 0}.$$
We define $supp^{\bf v}(\O)=\O.$ 

\noindent (ii) The ${\bf v}$-essential subnetwork of a reaction network $\cal R$ is the set of reactions in $\cal R$ whose reaction vectors are not orthogonal to $\bf v,$ i.e.
$${\cal R}_{\bf v} = {\cal R}\backslash \{P\to P'\mid (P'-P)\cdot{\bf v}=0\}$$

\noindent (iii) The ${\bf v}$-essential support of a reaction network ${\cal R}$ is 
$$esupp^{\bf v}({\cal R}) = supp^{\bf v}({\cal{SC}}({\cal R}_{\bf v})).$$
\end{definition}
\indent Note that $\cal R_{\bf v}=\O$ implies that all reaction vectors in $\cal R$ are orthogonal to $\bf v.$ In this case we have ${\cal R}_{\bf v}=esupp^{\bf v}({\cal R})= \O.$

\noindent {\bf Example.} In Figure~\ref{Fig:esupp} the dots represent six complexes and the arrows describe the four reactions among them. The two lines shown in the picture are the essential supports corresponding to the two vectors $\bf v_1$ and $\bf v_2$.

Throughout this paper we will also use the notation
$$esupp^{\bf v}({\cal R})_{<0}=\{P\in \mathbb R^2\mid (P-Q)\cdot {\bf v} < 0\ for \ all\ Q\in esupp^{\bf v}({\cal R})\} $$
\noindent and define $esupp^{\bf v}({\cal R})_{>0}$ similarly. 
\begin{definition} A reaction network $\cal R$ is called endotactic if for any nonzero vector $\bf v$ with $\cal R_{\bf v}\neq \O$ we have 
\begin{equation}\label{star}
\{P\to P' \mid P\in esupp^{{\bf v}}({\cal R}) \text{ and }P'\in esupp^{{{\bf v}}}({\cal R})_{<0}\}=\O
\end{equation}
\end{definition}
\indent In other words, and perhaps more illuminating, a network is endotactic if and only if it passes the ``parallel sweep test" for any nonzero vector $\bf v:$  sweep the lattice plane with a line $L$ orthogonal to $\bf v,$ going in the direction of $\bf v,$ and stop when $L$ encounters a source complex corresponding to a reaction which is not parallel to $L$. Now check that no reactions with source on $L$ points towards the swept region. Note that if $\cal R_{\bf v}=\O$ then $L$ never stops. In this case we still say that the network has passed the parallel sweep test for $\bf v.$

It is easy to verify that a reaction network is endotactic. Although, by definition, condition (\ref{star}) needs to be met by any vector $\bf v,$ it is easy to see that only a finite number of vectors $\bf v$ need to satisfy (\ref{star}) in order to conclude that the network is endotactic. This fact is explained in Proposition \ref{prop:endo}, which we discuss next.

Let $\bf V$ denote the set of inward normal unit vectors to the sides of $conv({\cal{SC(R)}}).$ If $conv({\cal{SC(R)}})$ is a line segment, we consider both normal unit vectors as inward.      

\begin{prop}\label{prop:endo}
A reaction network $\cal R$ is endotactic if and only if condition (\ref{star}) holds for any vector $\bf v\in \bf V\cup \{\pm\bf i, \pm\bf j\}$, where $\{\bf i, \bf j\}$ is the standard base of the cartesian plane.
\end{prop}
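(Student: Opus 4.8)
The plan is to prove both directions, the forward one being immediate: if $\cal R$ is endotactic then (\ref{star}) holds for every nonzero ${\bf v}$ (those with $\cal R_{\bf v}=\O$ trivially, the rest by definition), hence in particular for the finitely many ${\bf v}\in{\bf V}\cup\{\pm{\bf i},\pm{\bf j}\}$. All the content is in the converse, which I would prove by contraposition: assuming (\ref{star}) \emph{fails} for some unit vector ${\bf w}$, I will exhibit a vector in ${\bf V}\cup\{\pm{\bf i},\pm{\bf j}\}$ for which it also fails. A failure at ${\bf w}$ is witnessed by a single reaction $\rho\colon P\to P'$ with $P\in esupp^{\bf w}(\cal R)$ and $(P'-P)\cdot{\bf w}<0$; note this forces $\rho\in\cal R_{\bf w}$, so $P$ minimizes the functional $\cdot\,{\bf w}$ over the essential sources ${\cal{SC}}(\cal R_{\bf w})$. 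Write $Q=conv({\cal{SC}}(\cal R))$ for the hull of \emph{all} sources and let $F$ be its minimal face in the direction $-{\bf w}$.

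The first reduction is to arrange that the witnessing source is a vertex of $Q$ that is ${\bf w}$-minimal over all sources, not merely over the essential ones. If $F$ is an edge of $Q$, then ${\bf w}$ is exactly its inward unit normal, so ${\bf w}\in{\bf V}$ and we are already done. Hence I may assume $F=\{Q_0\}$ is a single vertex. If $P=Q_0$ we pass to the rotation step below. If $P\neq Q_0$, then since $Q_0$ is the strict ${\bf w}$-minimizer while $P$ lies on the essential support line, $Q_0$ cannot be an essential source, i.e. every reaction issuing from $Q_0$ is orthogonal to ${\bf w}$; I would then perturb ${\bf w}$ slightly to a nearby ${\bf w}'$ that reactivates one of these reactions and, choosing the sign of the perturbation, makes it point into the swept region. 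Because $Q_0$ is the strict minimizer it remains ${\bf w}'$-minimal for small perturbations, so this reactivated reaction witnesses a failure of (\ref{star}) at ${\bf w}'$ whose witnessing source is the full-hull vertex $Q_0$.

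It remains to treat a failure whose witness $\rho\colon P\to P'$ has $P$ equal to a vertex of $Q$ that is ${\bf w}$-minimal over all sources. Let $N_P$ be the normal cone of this vertex (the arc of directions for which $P$ is ${\bf w}$-minimal on $Q$); its endpoints are inward edge normals, hence lie in ${\bf V}$. As ${\bf v}$ rotates within $N_P$ the source $P$ stays minimal, so $\rho$ keeps witnessing a failure as long as ${\bf v}$ stays in the open semicircle $S=\{{\bf v}:(P'-P)\cdot{\bf v}<0\}$, whose two boundary directions are antipodal. If $Q$ is two-dimensional then $N_P$ has angular width strictly less than $\pi$ and so cannot contain both antipodal boundary directions of $S$; a short arc-chasing argument then shows one endpoint of $N_P$ lies in $S$, giving a failure at a vector of ${\bf V}$. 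In the remaining situations --- $Q$ a segment or a point (where ${\bf V}$ is too small or empty) or the reaction vector parallel to the relevant edge of $Q$ --- the obstruction is precisely that $N_P\cap S$ is an entire open semicircle; since $\pm{\bf i},\pm{\bf j}$ are equally spaced at $\pi/2$, every open semicircle contains one of them, and we obtain a failure at an axis direction.

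The step I expect to be the main obstacle is the first reduction, namely the careful bookkeeping of the essential subnetwork $\cal R_{\bf v}$ as ${\bf v}$ varies. Reactions orthogonal to ${\bf v}$ drop silently out of $\cal R_{\bf v}$, so neither $esupp^{\bf v}(\cal R)$ nor the witnessing source is continuous in ${\bf v}$, and one must verify that the perturbation can always be oriented so as to recreate --- rather than dissolve --- a violation; the observation that an edge face forces ${\bf w}\in{\bf V}$ is what keeps this case analysis finite. By comparison the rotation and the pigeonhole argument for the coordinate directions are routine once the witness has been moved onto a vertex of the full source hull.
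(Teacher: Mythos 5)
Your proposal is correct, but it is organized quite differently from the paper's argument. The paper proves the ``if'' direction directly: for an arbitrary ${\bf v}$ with ${\cal R}_{\bf v}\neq\O$ it locates the ${\bf v}$-minimal vertex $Q_k$ of $conv({\cal{SC(R)}})$, splits into cases according to whether $Q_k$ lies on the essential support of one or of both adjacent edge normals, and shows that (\ref{star}) applied to those normals confines every target of a reaction out of $Q_k$ to the cone spanned by the adjacent edges, hence to $esupp^{\bf v}({\cal R})_{>0}$; the collinear case is handled separately with $\pm{\bf i},\pm{\bf j}$. You argue the contrapositive instead: a violation at an arbitrary ${\bf w}$ is first relocated, via a small perturbation of ${\bf w}$ when the globally ${\bf w}$-minimal vertex is non-essential, so that its witness sits at a vertex of the full source hull, and the direction is then rotated inside that vertex's normal cone until it reaches an edge normal, the condition $(P'-P)\cdot{\bf v}<0$ being an open semicircle and hence preserved; the pigeonhole over $\pm{\bf i},\pm{\bf j}$ covers exactly the degenerate hulls where the normal cone is a half-circle or the whole circle. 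The two proofs share the same geometric core (the normal fan of $conv({\cal{SC(R)}})$), but your perturbation step is a genuine addition: it makes explicit the discontinuity of $esupp^{\bf v}({\cal R})$ in ${\bf v}$ that the paper absorbs into its case split, and your semicircle argument isolates precisely why the coordinate directions are needed only when the sources are collinear. Both arguments are complete; yours is longer but more transparent about the role of the essential subnetwork.
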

\begin{proof} The ``only if" implication is clear. 
For the ``if" implication, let $\bf v$ be any unit vector such that $\mathcal R_{\bf v}\neq 0.$ If $\bf v \in \bf V\cup \{\pm\bf i, \pm\bf j\}$, there is nothing to prove. Else,  if $[Q_1\ldots Q_l]$ denotes the polygon $\partial conv({\cal{SC(R)}}),$ assume that  
$esupp^{\bf v}({\cal R}) = \{Q_k\}.$ 

%
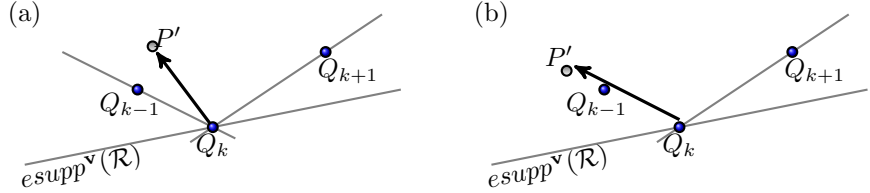
\begin{figure}[h]
\begin{center}
\begin{tabular}{cc}
\begin{tikzpicture}[thick, >=stealth', scale = .5] 

\draw[gray] (-.6,-.4) -- (4.5,3);
\draw[gray] (.6,-.3) -- (-4,2);
\draw[gray] (-5,-1) -- (5,1);
\draw[->, very thick] (0,0) -- (-1.5,2);

\shadedraw [] (-1.5,2)+(-.1,.15) circle (.13cm);
\shadedraw [shading=ball] (0,0) circle (.13cm);
\shadedraw [shading=ball] (3,2) circle (.13cm);
\shadedraw [shading=ball] (-2,1) circle (.13cm);

\node at (-5,3) {(a)};
\node at  (-1.2,2.4) {$P'$};
\node at  (0,-.5) {$Q_k$};
\node at  (-2.2,.6) {$Q_{k-1}$};
\node at  (3.6,1.5) {$Q_{k+1}$}; 
\node[rotate=12] at (-3.5,-1.1) {$esupp^{{\bf v}}(\cal R)$};
\end{tikzpicture} 
&
\hspace{.3cm}
\begin{tikzpicture}[thick, >=stealth', scale = .5] 
\draw[gray] (-.6,-.4) -- (4.5,3);
\draw[gray] (-5,-1) -- (5,1);

\shadedraw [shading=ball] (0,0) circle (.13cm);
\shadedraw [shading=ball] (3,2) circle (.13cm);
\shadedraw [shading=ball] (-2,1) circle (.13cm);
\shadedraw [] (-3,1.5) circle (.13cm);
\node at (-5,3) {(b)};
\node at  (-3.2,1.9) {$P'$};
\node at  (0,-.5) {$Q_k$};
\node at  (-2.2,.6) {$Q_{k-1}$};
\node at  (3.6,1.5) {$Q_{k+1}$}; 
\node[rotate=12] at (-3.5,-1.1) {$esupp^{{\bf v}}(\cal R)$};
\pgftransformshift{\pgfpoint{0cm}{.2cm}} 
\draw[->, very thick] (0,0) -- (-2.8,1.43);
\end{tikzpicture} 

\end{tabular}
\end{center}
\caption{Possible reactions $Q_k\to P'$ in the proof of Proposition \ref{prop:endo}. In (a), the lines $Q_{k-1}Q_{k}$ and $Q_kQ_{k+1}$ are essential supports for their inward normal vectors; the target $P'$ belongs to the interior of the angle $\widehat{Q_{k-1}Q_kQ_{k+1}}$. In (b), only $Q_kQ_{k+1}$ is essential support; $P'$ is constrained on the half line $[Q_kQ_{k-1}$. In both cases, $P'\in esupp^{\bf v}({\cal R})_{>0}.$ }\label{Fig:proofProp}
\end{figure}

First suppose  that $conv({\cal{SC(R)}})$ is not contained in a line. 
Let $\bf v_{k-1}$ and $\bf v_k$ denote inward normal vectors to $Q_{k-1}Q_{k}$ and $Q_{k}Q_{k+1}.$ The source complex $Q_k$ may belong to both or exactly one of the sets $esupp^{\bf v_k}(\cal R)$ and $esupp^{\bf v_{k+1}}(\cal R).$ 

If $Q_k\in esupp^{\bf v_k}({\cal R})\cap esupp^{\bf v_{k+1}}(\cal R),$ then (see Figure \ref{Fig:proofProp} (a)) condition (\ref{star}) applied to $\bf v_{k}$ and $\bf v_{k+1}$ shows that the target $P'$ of any reaction $Q_k\to P'$ must belong to the closed positive cone generated by the vectors $\overrightarrow{Q_{k}Q_{k-1}}$ and $\overrightarrow{Q_{k}Q_{k+1}}.$ 
This cone (except its vertex $Q_k$) is contained in $esupp^{v}({\cal R})_{>0}.$

If $Q_k\in esupp^{\bf v_k}({\cal R})\backslash esupp^{\bf v_{k+1}}(\cal R),$ all reactions $Q_k\to P'$ are along the line $Q_{k-1}Q_k.$ In fact  condition (\ref{star}) applied to $\bf v_{k+1}$ shows that they must have the direction of $\overrightarrow{Q_kQ_{k-1}}.$ Therefore (see Figure \ref{Fig:proofProp} (b))  $P'\in  esupp^{\bf v}({\cal R})_{>0}.$

Finally, if $conv({\cal{SC(R)}}) = [Q_1Q_2]$ is a line segment then (\ref{star}) applied to the two normal vectors of the line $Q_1Q_2$ shows that all target complexes lie on the line $Q_1Q_2.$ Moreover, (\ref{star}) applied to $\{\pm \bf i,\pm \bf j\}$ guarantees that in fact the all target complexes of reactions with sources $Q_1$ and $Q_2$ lie on the {\em segment} $[Q_1Q_2]$. It is now clear that (\ref{star}) holds for any vector $\bf v.$  
\end{proof}

Using Proposition \ref{prop:endo}, it is straightforward to check whether a network is endotactic or not.
A few examples are presented in Figure \ref{Fig:endoExample}.  

%
\begin{figure}
\begin{center}
\begin{tabular}{cccc}
\begin{tikzpicture}[thick, >=stealth', scale = .55] 

\node at  (-3,2) {(a)};

\draw[thin, gray] (-.5,-2.5) -- (-3.5,.5); 
\draw[thin, gray] (-3.6,-.4) -- (.6,2.4); 
\draw[thin, gray] (-.3,2.6) -- (1.8,-1.6);
\draw[thin, gray] (2,-.8) -- (-1.5,-2.2);
\draw[thin, gray] (-2.5,1) -- (.5,-2);

\draw[<->,very thick] (-1.15,-1.85) -- (-2.85,-.15); 
\draw[->,very thick] (-1,-.5) -- (2,1.5); 
\draw[->,very thick] (1.5,-1) -- (-.85,-.53);
\draw[<-,very thick] (1.4,-.8) -- (0,2);

\shadedraw [shading=ball] (-1,-2) circle (.13cm);
\shadedraw [shading=ball] (-3,0) circle (.13cm);
\shadedraw [shading=ball] (0,2) circle (.13cm);
\shadedraw [] (2,1.5)+(.15,.11) circle (.13cm);
\shadedraw [shading=ball] (1.5,-1) circle (.13cm);
\shadedraw [shading=ball] (-1,-.5) circle (.13cm);
\end{tikzpicture} 
&
\begin{tikzpicture}[thick, >=stealth', scale = .55] 

\node at  (-3,2) {(b)};

\draw[thin, gray] (-.5,-2.5) -- (-3.5,.5); 
\draw[thin, gray] (-3.6,-.4) -- (.6,2.4); 
\draw[thin, gray] (-.3,2.6) -- (1.8,-1.6);
\draw[thin, gray] (2,-.8) -- (-1.5,-2.2);
\draw[thin, gray] (-2.5,1) -- (.5,-2);
\draw[thin, gray] (-2,1.5) -- (-.25,-2);

\draw[<->,very thick] (-1.15,-1.85) -- (-2.85,-.15); 
\draw[->,very thick] (-1,-.5) -- (2,1.5);
\draw[<-, very thick] (1.325,-.965) -- (-1,-.5);
\draw[<->,very thick] (1.4,-.8) -- (.1,1.8); 
\shadedraw [shading=ball] (-1,-2) circle (.13cm);
\shadedraw [shading=ball] (-3,0) circle (.13cm);
\shadedraw [shading=ball] (0,2) circle (.13cm);
\shadedraw [] (2,1.5)+(.15,.11) circle (.13cm);
\shadedraw [shading=ball] (1.5,-1) circle (.13cm);
\shadedraw [shading=ball] (-1,-.5) circle (.13cm);
\end{tikzpicture} 
&
\begin{tikzpicture}[thick, >=stealth', scale = .55] 

\node at  (-1.6,2) {(c)};

\draw[thin, gray] (-2,-2) -- (2.5,2.5); 
\draw[thin, gray] (-1.6,-2.4) -- (-1.6,1.7); 
\draw[thin, gray] (-2.4,-1.6) -- (1.7,-1.6); 
\draw[thin, gray] (1,-1.6) -- (1,2.8); 
\draw[thin, gray] (-1.6,1) -- (2.8,1); 

\draw[thin, gray] (-2,-2) -- (2.5,2.5); 
\pgftransformshift{\pgfpoint{0cm}{.3cm}} 
\draw[->,very thick] (-.05,-.05)  -- (1.85,1.85); 
\draw[->,very thick] (-1.7,-1.7)  -- (-.15,-.15); 
\pgftransformshift{\pgfpoint{0cm}{-.3cm}} 
\shadedraw [shading=ball] (-1.6,-1.6) circle (.13cm);
\shadedraw [] (2,2) circle (.13cm);
\shadedraw [shading=ball] (0,0) circle (.13cm);
\shadedraw [shading=ball] (1,1) circle (.13cm);
\pgftransformshift{\pgfpoint{0cm}{-.3cm}} 
\draw[->,very thick] (1.1,1.1)  -- (-1.4,-1.4); 
\end{tikzpicture} 
&
\begin{tikzpicture}[thick, >=stealth', scale = .55] 

\node at  (-1.6,2) {(d)};

\draw[thin, gray] (-2,-2) -- (2.5,2.5); 
\draw[thin, gray] (-1.6,-2.4) -- (-1.6,1.7); 
\draw[thin, gray] (-2.4,-1.6) -- (1.7,-1.6); 
\draw[thin, gray] (1,-1.6) -- (1,2.8); 
\draw[thin, gray] (-1.6,1) -- (2.8,1); 

\draw[thin, gray] (-2,-2) -- (2.5,2.5); 
\pgftransformshift{\pgfpoint{0cm}{.3cm}} 
\draw[->, very thick] (-.05,-.05)  -- (0.85,0.85); 
\draw[->, very thick] (-1.7,-1.7)  -- (-.15,-.15); 

\pgftransformshift{\pgfpoint{0cm}{-.3cm}} 

\draw[->, very thick] (1,1)  -- (1.85,1.85); 
\shadedraw [shading=ball] (-1.6,-1.6) circle (.13cm);
\shadedraw [] (2,2) circle (.13cm);
\shadedraw [shading=ball] (0,0) circle (.13cm);
\shadedraw [shading=ball] (1,1) circle (.13cm);
\pgftransformshift{\pgfpoint{0cm}{-.3cm}} 
\draw[->, very thick] (1.1,1.1)  -- (-1.4,-1.4); 
\end{tikzpicture} 

\end{tabular}
\end{center}
\caption{Examples of endotactic networks -- (a) and (c), and non-endotactic networks -- (b) and (d). If the convex hull of the sources is a non-degenerate polygon ((a) and (b)), the parallel sweep test is done with vectors of $\bf V.$ If the sources are confined to a line ((c) and (d)), the parallel sweep test for $\{\pm\bf i,\pm\bf j\}$ needs to be performed also; note that using just the vectors in $\bf V$ would have concluded that the network (d) is endotactic.}\label{Fig:endoExample}
\end{figure}
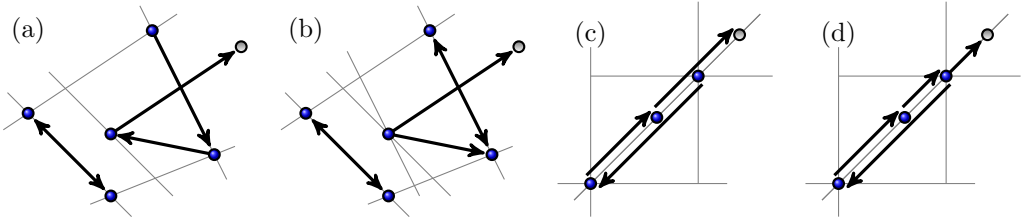

We have mentioned that the class of endotactic networks includes the class of weakly reversible networks. Indeed, this is easy to see:

\begin{lemma}\label{domReact}
Any weakly reversible reaction network is endotactic.
\end{lemma}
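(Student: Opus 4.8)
The plan is to argue by contradiction, translating the geometric definition of ``endotactic'' into a statement about the values of the linear functional $x\mapsto x\cdot{\bf v}$, and then to use weak reversibility to manufacture a source complex of an essential reaction lying strictly inside the already-swept region. This directly contradicts the minimality built into the definition of $esupp^{\bf v}({\cal R})$.

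First I would unwind the definitions on a fixed nonzero ${\bf v}$ with ${\cal R}_{\bf v}\neq\O$. Since ${\cal{SC}}({\cal R}_{\bf v})$ is a finite nonempty set, I can set $\mu=\min\{Q\cdot{\bf v}\mid Q\in{\cal{SC}}({\cal R}_{\bf v})\}$, the value at which the sweeping line first meets a source of a non-orthogonal reaction. By the definition of $supp^{\bf v}$ as the boundary of the minimal half-plane in the $+{\bf v}$ direction containing ${\cal{SC}}({\cal R}_{\bf v})$, this gives $esupp^{\bf v}({\cal R})=\{x\mid x\cdot{\bf v}=\mu\}$ and $esupp^{\bf v}({\cal R})_{<0}=\{x\mid x\cdot{\bf v}<\mu\}$. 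If ${\cal R}$ failed to be endotactic, then by definition some such ${\bf v}$ admits a reaction $P\to P'$ with $P\cdot{\bf v}=\mu$ and $P'\cdot{\bf v}<\mu$. I would note at once that this reaction satisfies $(P'-P)\cdot{\bf v}<0$, so it is not orthogonal to ${\bf v}$; hence $P\to P'\in{\cal R}_{\bf v}$ and $P\in{\cal{SC}}({\cal R}_{\bf v})$, consistent with $P\cdot{\bf v}=\mu$.

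Next I would invoke weak reversibility (Definition \ref{def:WR}): $P$ and $P'$ lie in the same strongly connected component, so there is a directed return walk $P'=R_0\to R_1\to\cdots\to R_m=P$. Because $R_0\cdot{\bf v}=P'\cdot{\bf v}<\mu=P\cdot{\bf v}=R_m\cdot{\bf v}$, the functional ${\bf v}$ is not constant along this walk. The decisive step is then a second, one-dimensional sweep along the walk itself: let $R_{j-1}\to R_j$ be the first edge with $(R_j-R_{j-1})\cdot{\bf v}\neq 0$. Every earlier edge is orthogonal to ${\bf v}$, so $R_0,\dots,R_{j-1}$ all share the value $P'\cdot{\bf v}<\mu$; in particular $R_{j-1}\cdot{\bf v}<\mu$. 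On the other hand, since $R_{j-1}\to R_j$ is non-orthogonal to ${\bf v}$ it lies in ${\cal R}_{\bf v}$, so $R_{j-1}\in{\cal{SC}}({\cal R}_{\bf v})$ and therefore $R_{j-1}\cdot{\bf v}\geq\mu$. This contradiction forces ${\cal R}$ to be endotactic.

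I expect the only delicate point to be the treatment of edges orthogonal to ${\bf v}$ along the return walk: such edges neither change the value of ${\bf v}$ nor belong to ${\cal R}_{\bf v}$, so one cannot simply examine the single edge leaving $P'$. Isolating the \emph{first} genuinely non-orthogonal edge, and observing that its source still sits strictly below the level $\mu$, is exactly what makes the argument close; everything else is a direct reading of Definition \ref{def:esup}. I would also remark that the proof needs nothing beyond the existence of a directed walk back to $P$, so Proposition \ref{prop:endo} plays no role here.
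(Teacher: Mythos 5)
Your proof is correct and complete, and it is worth noting that it is substantially more careful than the paper's own argument, which consists of two sentences: in a weakly reversible network every complex is a source, and ``the definition of the essential support implies immediately'' that all reactions originating on $esupp^{\bf v}({\cal R})$ point into $esupp^{\bf v}({\cal R})_{>0}$. The issue you flag as the ``only delicate point'' is precisely what that one-liner glosses over: the target $P'$ of a putative violating reaction is certainly a source of \emph{some} reaction, but not necessarily of a reaction in ${\cal R}_{\bf v}$, so minimality in the definition $esupp^{\bf v}({\cal R})=supp^{\bf v}({\cal{SC}}({\cal R}_{\bf v}))$ does not immediately force $P'\cdot{\bf v}\ge\mu$. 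Indeed, ``every complex is a source'' alone does not imply endotactic: with ${\bf v}=(1,0)$ the network $X\to 0\rightleftharpoons Y$ has every complex as a source, yet ${\cal{SC}}({\cal R}_{\bf v})=\{(1,0)\}$, so $esupp^{\bf v}({\cal R})$ is the line $x=1$ and the reaction $X\to 0$ sends its source on that line to a target in $esupp^{\bf v}({\cal R})_{<0}$. What rescues the weakly reversible case is exactly your use of strong connectivity: the return walk $P'=R_0\to\cdots\to R_m=P$ must contain a first edge not orthogonal to ${\bf v}$, and its source sits at level $P'\cdot{\bf v}<\mu$ while belonging to ${\cal{SC}}({\cal R}_{\bf v})$, which is the desired contradiction. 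So your argument is in the same spirit as the paper's but supplies a step that is genuinely needed; the one small addition I would make is to note explicitly that the walk is nonempty (it is, since $P\cdot{\bf v}\neq P'\cdot{\bf v}$ forces $P\neq P'$), which you implicitly use when extracting the first non-orthogonal edge.
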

\begin{proof}
In a weakly reversible network all complexes are sources, ${\cal{SC(R)}}=\cal S.$ The definition of the essential support implies immediately that for every vector $\bf v$ all reactions originating on $esupp^{\bf v}(\cal R)$ point inside $esupp^{\bf v}({\cal R})_{>0}.$
\end{proof}

Because of its relation to complex-balancing, weak reversibility is a widespread assumption in much of the work done in Chemical Reaction Network Theory (\cite{cinci}). On the other hand, the results of the present paper naturally lend themselves to the much larger class of endotactic networks. In the subsequent sections we will prove persistence and permanence of endotactic $\kappa$-variable mass-action systems with two species. We conjecture that the same results are true for any number of species: 

\smallskip
\noindent {\bf Extended Persistence Conjecture.} {\em Any endotactic $\kappa$-variable mass-action system is persistent.}\\
\noindent {\bf Extended Permanence Conjecture.} {\em Any endotactic $\kappa$-variable mass-action\footnote{Same for power-law.} system is permanent.}
\smallskip

Here ``extended" refers to two different generalizations: the assumption of an endotactic, and not necessarily weakly reversible network, and for the use of the $\kappa$-variable mass-action kinetics, and not the usual mass-action kinetics, where $\kappa$ is a fixed vector of parameters. 

\section{Construction of a forward-invariant polygon}\label{sec:geom} 
As in the proof of persistence of the reversible example (\ref{ex:Illustrative}), our approach to proving Theorem \ref{thm:main} is based on defining a convex polygon $\cal P$ whose convex hull contains $T(c(0))$. Here we explain how this construction is carried out for a not necessarily reversible network. 

In example (\ref{ex:Illustrative}) the aggregate effect of any two reverse reactions $P\rightleftharpoons P'$ is to drive the trajectory towards the $\Gamma$ region $\delta \le c^{P'-P}\le 1/\delta,$ therefore towards the interior of $\cal P.$ In other words, the source of the ``bad" reaction that might force the trajectory to cross to the exterior of $\cal P$ is smaller (up to  some constant) than the source of the ``good" reaction that keeps the trajectory in the interior of $\cal P.$

The proof of persistence for example  (\ref{ex:Illustrative}) contains all the ingredients for the proof of the persistence for any two-species reversible $\kappa$-variable mass-action system. In the more general setup of a weakly reversible network, we need to adjust the setup of the invariant polygon and parts of our discussion. 

In the reversible case we saw that if $c(t_0)\in \cal P,$ every pair of reactions $P\rightleftharpoons P'$ contains a ``good" reaction that dictates the overall effect of the reversible reaction. In the weakly reversible case we show that there is a ``good" reaction that dictates the direction of $\dot c(t_0),$ i.e. {\em the aggregate effect of the whole network}, up to a small enough perturbation. This amounts to showing that at time $t_0$ with $c(t_0)\in \cal P$ there is a complex that is source to a ``good" reaction and whose corresponding monomial is larger than the monomials corresponding to all other source complexes (up to  some constant). This way we are led to comparing monomials $c^P$ and $c^{P'}$ for {\em all pairwise combinations of source complexes} $P$ and $P'.$

Let us fix a trajectory $T(c(0))$ of (\ref{def:massAct}) and denote $c(t)=(x(t),y(t))$ for $t\geq 0.$ 
Let $(1,r_1)\ldots, (1,r_e)$ and $(1,s_1)\ldots, (1,s_f)$ be normal vectors of edges with finite and nonzero slope in the complete graph with vertices ${\cal {SC}}(\cal R):$
$$\{r_1,\ldots, r_e, s_1,\ldots, s_f\} = \left\{\frac{m_1-m_2}{n_2-n_1}\mid (m_1,n_1),(m_2,n_2)\in {\cal {SC}}({\cal R}), m_1\neq m_2, n_1\neq n_2 \right\} $$
\noindent where $s_1<\ldots<s_f < 0 < r_1<\ldots<r_e.$

Let $0<\delta<1$ be fixed. The pairwise comparison of source complexes $P=(m,n)$ and $(m',n')$ up to  a constant leads to $c^P-\delta c^{P'}=0$ and $c^P-(1/\delta) c^{P'}=0$, or $y=\delta^{1/(n-n')} x^{\sigma}$ and $y=(1/\delta)^{1/(n-n')} x^{\sigma},$  where $\sigma\in \{r_1,\ldots, r_e, s_1,\ldots, s_f\}.$ Let
$$\delta' = \min(\delta^{1/(n'-n)}\mid (m,n),(m',n')\in {\cal{SC(R)}}, n\neq n')$$ 
\noindent and consider the curves 
$$y=\delta' x^{r_i}, y=(1/\delta') x^{r_i}  \text{ and } y=\delta' x^{s_j}, y=(1/\delta') x^{s_j} \text{ for all } i\in \{1,\ldots, e\} \text{ and } j\in \{1,\ldots, f\},$$
\noindent depicted using dots in Figure \ref{Fig:bigPicture}.
We further let $r_{i+.5}$ be positive reals and $s_{j+.5}$ be negative reals for all $i\in \{0,\ldots, e\}$  and  $j\in \{0,\ldots, f\}$ such that 
$$r_{i-.5}<r_{i}<r_{i+.5}\text{ and }s_{i-.5}<s_{i}<s_{i+.5}.$$
\noindent The solid curves in Figure \ref{Fig:bigPicture} are given by 
$$y=x^{r_{i+.5}} \text{ and } y=x^{s_{j+.5}},  i\in \{0,\ldots, e\} \text{ and } j\in \{0,\ldots, f\}.$$

\begin{remark}\label{rem:frac}
In our construction, the vertices   of $\cal P$ will be confined to these ``fractional indices" curves. Although this constraint is not necessary and we may allow more freedom in choosing the vertices of $\cal P$ as in the reversible example  (\ref{ex:Illustrative}),  our construction conveniently makes $\cal P$ depend continuously on a single parameter. 
\end{remark}

Let $\xi<1$ be positive and small enough and $M>1$ be positive and large enough such that (see Figure \ref{Fig:bigPicture}):
\begin{myindentpar}{1cm}
(P1) $c(0)\in (\xi,M)^2$;\\
(P2) all intersection of the curves in Figure \ref{Fig:bigPicture} lie in the square $(\xi,M)^2$;\\
(P3) $(0,\xi)^2$ and $(M,\infty)^2$ lie below, respectively above all negative-exponent ($s_j$ or $s_{j+.5}$) curves. Likewise, $(0,\xi)\times (M,\infty)$ and  $(M,\infty)\times (0,\xi)$ lie above, respectively below all curves of positive exponent ($r_i$ or $r_{i+.5}$);\\
(P4) All negative exponent curves intersect the line segments $(0,\xi)\times \{M\}$ and $\{M\}\times (0,\xi)$;\\
(P5)
\vspace{-.8cm}
\begin{align}
\nonumber
\xi<\min(\delta^{1/(n'-n)}\mid (m,n),(m',n')\in {\cal{SC(R)}}, n\neq n'),\\\nonumber
\xi<\min(\delta^{1/(m'-m)}\mid (m,n),(m',n')\in {\cal{SC(R)}}, m\neq m'),\\\nonumber
M > \max((1/\delta)^{1/(n'-n)}\mid (m,n),(m',n')\in {\cal{SC(R)}}, n\neq n'),\\\nonumber
M > \max((1/\delta)^{1/(m'-m)}\mid (m,n),(m',n')\in {\cal{SC(R)}}, m\neq m').
\end{align}
\end{myindentpar}
%
\begin{figure}
\begin{center}
\begin{tikzpicture}[>=stealth, scale =.58]
\fill[color = blue!25!white]  (0,0) rectangle +(4,4); 
\shade[top color=white, bottom color = blue!25!white] (0,12) rectangle +(4,5); 
\shade[right color=white, left color = blue!25!white] (12,0) rectangle +(5,4); 
\begin{scope}[draw = white]
	\draw[clip] (12,12) rectangle + (5,5);
	\shadedraw[inner color = blue!25!white,outer color=white,draw=white] (7,7) rectangle +(10,10); 
\end{scope}
\draw[->](0,0)--(17,0) node [below] {$x$};
\draw[->](0,0)--(0,17) node [left] {$y$};;
\draw[very thin, gray](4,0)--(4,17); 
\draw[very thin, gray](0,4)--(17,4);
\draw[very thin, gray](0,12)--(17,12);
\draw[very thin, gray](12,0)--(12,17);
\node at  (4,-.5) {$\xi$};
\node at  (12,-.5) {$M$};
\node at  (-.5,4) {$\xi$};
\node at  (-.5,12) {$M$};
\begin{scope}
\clip (0,0) rectangle + (17,17);
\draw[thick,dotted] plot[domain = 0:13.7] (\x,.95*\x^2/11);
\draw[thick, dotted] plot[domain = 0:13.6] (\x,1.01*\x^2/11);
\draw[thick,dotted] plot[smooth, domain = 0.001:16.9] (.092*\x^2,\x);
\draw[thick,dotted] plot[smooth, domain = 0.001:16.9] (.084*\x^2,\x);
\draw[blue,thick](.5,3.5)--(1,2);
\node[rotate = -45] at  (1.5,1.5) {$\ldots$};
\draw[blue,thick] (3.5,.5)--(2,1);
\fill[blue] (.5,3.5) circle (.1);
\fill[blue] (1,2) circle (.1);
\fill[blue] (2,1) circle (.1);
\fill[blue] (3.5,.5) circle (.1);
\node at  (.4,3.9) {$A_1$};
\node at  (1.4,2) {$A_2$};
\node at  (2,1.4) {$A_{e}$};
\node at  (3.6,.23) {$A_{e+1}$};

\draw[blue,thick] (3.5,.5)--(13,.5);
\draw[blue,thick] (13,.5)--(14.3,1.2);
\node[rotate = 68] at  (14.7,2) {$\ldots$};
\draw[blue,thick] (14.9,2.7)--(15.2,3.7);
\fill[blue] (13,.5) circle (.1);
\fill[blue] (14.3,1.2) circle (.1);
\fill[blue] (14.9,2.7) circle (.1);
\fill[blue] (15.2,3.7) circle (.1);
\node at  (13,.23) {$B_1$};
\node at  (14.4,.8) {$B_2$};
\node at  (15.3,2.4) {$B_{f}$};
\node at  (15.9,4.04) {$B_{f+1}$};
\draw[blue,thick] (15.2,3.7)--(15.2,12.3);
\draw[blue,thick] (15.2,12.3)--(14.7,13.7);
\node[rotate = -55] at  (14.2,14.6) {$\ldots$};
\draw[blue,thick] (13.6,15.4)--(12.6,16);
\fill[blue] (15.2,12.3) circle (.1);
\fill[blue] (14.7,13.7) circle (.1);
\fill[blue] (13.6,15.4) circle (.1);
\fill[blue] (12.6,16) circle (.1);
\node at  (15.6,12) {$C_1$};
\node at  (15.2,13.6) {$C_2$};
\node at  (14.1,15.4) {$C_{e}$};
\node at  (12.3,16.4) {$C_{e+1}$};

\draw[blue,thick] (12.6,16)--(3.8,16);
\draw[blue,thick] (3.8,16)--(2.7,15.8);
\node[rotate = 29] at  (1.8,15.4) {$\ldots$};
\draw[blue,thick] (.5,14)--(1.1,15);
\fill[blue] (3.8,16) circle (.1);
\fill[blue] (2.7,15.8) circle (.1);
\fill[blue] (.5,14) circle (.1);
\fill[blue] (1.1,15) circle (.1);
\node at  (4.2,16.4) {$D_1$};
\node at  (2.4,16.2) {$D_2$};
\node at  (1.3,14) {$D_{f+1}$};
\node at  (1.6,14.8) {$D_{f}$};

\draw[blue,thick] (.5,3.5)--(.5,14);

\draw (0,0) .. controls +(90:.5cm) and +(240:1cm)..(1,2);
\draw (1,2) .. controls + (60:5cm) and +(210:3cm).. (14.7,13.7);
\draw(14.7,13.7).. controls +(30:.5cm) ..  (16,14.5) ;

\draw (0,0) .. controls +(90:1cm) and +(240:1cm)..(.5,3.5);
\draw (.5,3.5) .. controls + (60:5cm) and +(190:4cm).. (15.2,12.3);
\draw(15.2,12.3).. controls +(10:.5cm) ..  (16,12.4) ;

\draw (0,0) .. controls +(0:1cm) and +(210:1cm)..(3.5,.5);
\draw (3.5,.5) .. controls + (30:5cm) and +(260:4cm).. (12.6,16);
\draw(12.6,16).. controls +(80:1cm) ..  (12.75,17) ;

\draw (0,0) .. controls + (0:.5cm) and +(210:1cm)..(2,1);
\draw(2,1) .. controls + (30:5cm) and +(250:4cm).. (13.6,15.4);
\draw(13.6,15.4).. controls + (70:1cm) ..  (14.1,17) ;

\draw(3.8, 16) --(3.8,17);
\draw (3.8,16) .. controls + (270:2cm) and + (100:1cm) .. (4,12);
\draw (7,4.5) .. controls + (130:3cm) and +(280:1cm).. (4,12);
\draw (7,4.5) .. controls + (310:2cm) and +(160:2cm).. (13,.5);
\draw (16,.3) .. controls + (180:1cm) and +(340:.5cm).. (13,.5);

\draw[thick,dotted] (3.5,16.5) .. controls + (272:9cm) and + (173:9cm) .. (15.5,.8);
\draw[thick,dotted] (3,16.5) .. controls + (273:8cm) and + (170:9cm) .. (15,.5);

\draw(15.2, 3.7) --(16,3.7);
\draw (15.2,3.7) .. controls + (180:1cm) and + (350:1cm) .. (12,4);
\draw (.5,14) .. controls + (280:7cm) and + (170:3cm) .. (12,4);
\draw (.5,14) .. controls + (100:1cm) .. (.3,17);

\draw (1.1,15) .. controls + (290:7cm) and + (170:9cm) .. (14.9,2.7);
\draw (1.1,15) .. controls + (110:.5cm) .. (.7,17);
\draw (14.9,2.7) .. controls + (350:.1cm) .. (16,2.6);

\draw[thick,dotted] (.6,16) .. controls + (280:7cm) and + (175:9cm) .. (15.5,3.4);
\draw[thick,dotted] (.5,15) .. controls + (280:5cm) and + (175:9cm) .. (15.5,2.9);

\draw (2.7,15.8) .. controls + (280:7cm) and + (170:9cm) ..  (14.3,1.2);
\draw (2.7,15.8) .. controls + (100:.5cm) .. (2.5,17);
\draw (14.3,1.2) .. controls + (350:.1cm) .. (16,1.1);
\end{scope}
\end{tikzpicture} 
\end{center}
\caption{Setup of the invariant polygon ${\cal P} = A_1\ldots A_{e+1}B_1\ldots B_{f+1}C_1\ldots C_{e+1}D_1\ldots D_{f+1}$}\label{Fig:bigPicture}
\end{figure}

%
\begin{figure}
\begin{center}
\begin{tikzpicture}[>=stealth, scale =.38]

\node at (16.2,-.7) {$(0,\xi)$};
\node at (0,16.6) {$(\xi,0)$};
\draw (0,0)--(16,0);
\draw (0,0)--(0,16);
\draw [gray](16,0)--(16,16);
\draw [gray](0,16)--(16,16);
\draw[dotted, thick] plot[domain = 0:16] (\x,\x^2/130);
\draw[dotted, thick] plot[domain = 0:16] (\x,\x^2/80);
\draw[dotted, thick] plot[domain = 0:16] (\x^2/60,\x);
\draw[dotted, thick] plot[domain = 0:16] (\x^2/90,\x);
\draw[dotted, thick] plot[domain = 0.0001:16] (\x^3/320,\x^2/16);
\node[rotate = 38, fill=white, text = black] at (10.5,14) {$y=\frac{1}{\delta'}x^{r_2}$};
\draw[dotted, thick] plot[domain = 0.0001:16] (\x^3/256,\x^2/16);
\node[rotate = 37, fill=white, text = black] at (12.9,14) {$y=\delta' x^{r_2}$};

\draw plot[domain = 0:.842] (\x^2,19*\x);
\draw plot[domain = 0:2.832] (\x^2,5.65*\x);
\draw plot[domain = 0:1.3195] (\x^{10},.945*\x^{9});
\draw plot[domain = 0:2.5] (6.4*\x,\x^2);
\draw plot[domain = 0:.842] (19*\x,\x^2);

\node[rotate = 46, fill=white, text = black] at (6.3,14) {$y=x^{r_{1.5}}$};
\node[rotate = 35, fill=white, text = black] at (12,9) {$y=x^{r_{2.5}}$};

\draw [blue, thick](.5,13.25)--(.5,16);
\draw [blue, thick](.5,13.25)--(2,8);
\draw [blue, thick](2,8)--(5,4);
\draw [blue, thick, dashed](5,4)--(9,2);
\draw [blue,thick](9,2)--(13.5,.5);
\draw [blue,thick](13.5,.5)--(16,.5);

\fill[blue] (.5,13.25) circle (.15);
\fill[blue] (2,8) circle (.15);
\fill[blue] (5,4) circle (.15);
\fill[blue] (9,2) circle (.15);
\fill[blue] (13.5,.5) circle (.15);
\node at (1.2,13.25) {$A_1$};
\node at (1.3,8) {$A_2$};
\node at (4.3,4) {$A_3$};
\node at (8.6,1.6) {$A_e$};
\node at (13.9,1) {$A_{e+1}$};
\node[rotate = 60, fill=white, text=blue] at (7,3) {\normalsize$\approx$};
\draw [gray](0,16)--(16,16);

\draw (0,0)--(16,0);
\draw (0,0)--(0,16);
\draw [gray](16,0)--(16,16);
\draw [gray](0,16)--(16,16);

\end{tikzpicture} 
\end{center}
\caption{Detail of vertices $A_1,\ldots, A_{e+1}$}\label{Fig:colt}
\end{figure}
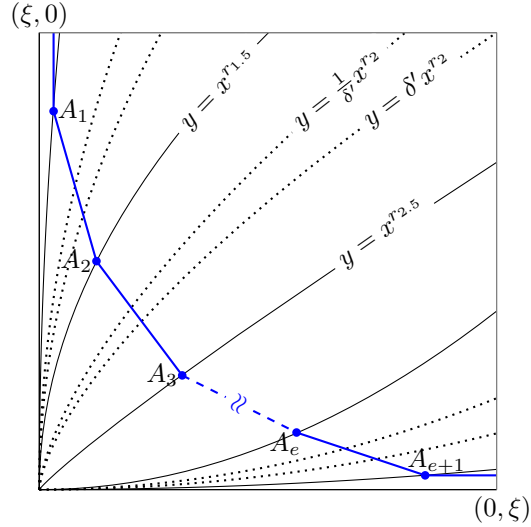

The goal is to obtain ${\cal P}=A_1\ldots A_{e+1}B_1\ldots B_{f+1}C_1\ldots C_{e+1}D_1\ldots D_{f+1}$  as in Figure \ref{Fig:bigPicture} with the property that 

\noindent (P*) {\em the dotted curves 
$y = \delta' x^{\sigma}$ and $y = (1/\delta') x^{\sigma}$ 
intersect the two sides of $\cal P$ that are orthogonal to the vector $(1, \sigma)$ and only these two sides.}

Conditions (P1--P5) allow for the construction of such a polygon, and moreover, assure that $\cal P$ is invariant, as we will see in the proof of Theorem \ref{thm:main}. Due to (P4), the $B_i$ and $D_i$ vertices are easily constructed in the correct shaded regions. Condition (P2) guarantees that on the shaded areas the pair of dotted curves $y=\delta' x^{\sigma},\ y=(1/\delta') x^{\sigma},$ $\sigma\in\{r_1,\ldots, r_e, s_1,\ldots, s_f\}$ lie between the corresponding solid curves, an important fact  that we will take advantage of later. Conditions (P3) and (P5) will also come in handy, as it will make some pairwise comparisons of source monomials straightforward. Finally, condition (P1) make the initial condition lie in $conv(\cal P).$

We construct ${\cal P}=A_1\ldots A_{e+1}B_1\ldots B_{f+1}C_1\ldots C_{e+1}D_1\ldots D_{f+1}$ one vertex at the time starting with $A_1 = (\alpha,\alpha^{s_{.5}})$ and going counterclockwise. All vertices of $\cal P$ lie on a fractional index curve and satisfy property (P*) above. In other words the sides $A_iA_{i+1},$ $C_iC_{i+1}$ are orthogonal to $(1,r_i)$ and the sides  $B_jB_{j+1}$ and $D_jD_{j+1}$ are orthogonal to $(1,s_j).$ We also require that $A_{e+1}B_1,$  $C_{e+1}D_1$ are horizontal and that $B_{f+1}C_1$ is vertical. The polygon $\cal P$ constructed this way is unique.

The vertices of $\cal P$ can indeed be chosen to lie in the four shaded areas of Figure \ref{Fig:bigPicture}. For instance,  $\lim_{\alpha\to 0} A_i = (0,0)$; therefore $A_i\in (0,\xi)^2$ for all $i\in \{1,\ldots, e+1\}$  for small $\alpha,$ whereas (P4) guarantees that $B_j\in (M,\infty)\times (0,\xi)$ for all $j.$ Finally, note that the side $D_{f+1}A_1$ of $\cal P$ need not be vertical, but in the limit $\alpha\to 0,$ $D_{f+1}A_1$ is vertical. In particular, ${\cal P}(\alpha)$ is convex for small $\alpha.$

Let $\alpha_0$ be such that ${\cal P}(\alpha_0)$ is a convex polygon whose construction is as explained above.  Then $\alpha\in(0,\alpha_0]$ defines  a one-parameter family of convex polygons;  the generic ${\cal P}(\alpha)$  is outlined in Figure \ref{Fig:bigPicture}. The vertices $A_1,\ldots, A_e$ are illustrated in more detail in Figure \ref{Fig:colt}.

Finally, let us remark that $P(\alpha)$ varies continuously with $\alpha$ and $conv({\cal P}(\alpha))$ is decreasing in $\alpha\in(0,\alpha_0)$ and is a cover of $\mathbb R_{>0}^2:$
\begin{equation}\label{covers}
conv({\cal P}(\alpha'))\supset conv({\cal P}(\alpha'')) \text{ if }\alpha'<\alpha''\text{ and }\bigcup _{\alpha=0}^{\alpha_0}conv({\cal P}(\alpha))=\mathbb R_{>0}^2.
\end{equation}

\section{Endotactic two-species $\kappa$-variable mass-action systems: persistence and permanence}\label{section:proof}
\subsection{Persistence} Let $(\cal R, \cal S, \cal C, \kappa)$ denote a two-species, endotactic $\kappa$-variable mass action system with $\kappa_{P\to P'}(t)\in (\eta,1/\eta)$ for all $P\to P'\in \cal R.$ Recall that $c(t)=(x(t),y(t))$ denotes a solution of (\ref{varMassAct}) with initial condition $c(0).$

The following lemma is a key ingredient in the proof of Theorem \ref{thm:main}. It states that if for some $t_0$, the source monomial of the reaction $P_0\to P'_0$ is larger than all other source monomials (up to  a constant), then $\dot c(t_0)$ does not deviate too far from $P'_0-P_0.$
\begin{lemma}\label{staysInside}
Let $P_0\to P'_0\in {\cal R}$ and let $\bf{v}$ be a vector such that $(P'_0-P_0)\cdot {\bf v}>0.$ Also let  ${\cal U}\subseteq {\cal{SC}}({\cal R})\backslash \{P_0\}.$ 
There exists a constant $\delta$ such that if for some $t_0\ge 0$ we have $c(t_0)\in \mathbb R_{>0}^2$ and
$${c(t_0)}^P<\delta {c(t_0)}^{P_0} \text{ for all }P\in \cal U$$
\noindent then
$$\left(\kappa_{P_0\to P'_0}(t_0)(P'_0-P_0){c(t_0)}^{P_0}+\sum_{P\to P', P\in {\cal U} }\kappa_{P\to P'}(t_0) (P'-P){c(t_0)}^P\right)\cdot{\bf{v}} > 0.$$
\end{lemma}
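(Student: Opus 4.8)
The plan is to treat this as an elementary ``dominant term'' estimate: when the source monomial $c(t_0)^{P_0}$ dwarfs every other source monomial appearing in the sum, the term carrying $P_0'-P_0$ controls the sign of the dot product, because $(P_0'-P_0)\cdot{\bf v}>0$ and all rate constants are pinched between $\eta$ and $1/\eta$. Notably, this lemma uses none of the endotactic structure of ${\cal R}$; it is a robustness statement valid for any two-species $\kappa$-variable mass-action system, and its role later will be to convert the geometric ``dominant source'' information supplied by the polygon ${\cal P}$ into control of the direction of $\dot c(t_0)$.

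Write $c=c(t_0)$ and set $a=(P_0'-P_0)\cdot{\bf v}>0$. First I would bound the distinguished term from below: since $\kappa_{P_0\to P_0'}(t_0)>\eta$ and $c^{P_0}>0$, its contribution to the dot product is at least $\eta\, a\, c^{P_0}$. Next I would bound the remaining sum from above in absolute value. Let $B=\max_{P\to P'\in{\cal R}}|(P'-P)\cdot{\bf v}|$ and let $N$ be the number of reactions $P\to P'$ with $P\in{\cal U}$; both are finite and depend only on the fixed data $({\cal R},{\bf v},{\cal U})$. Using $\kappa_{P\to P'}(t_0)<1/\eta$, the triangle inequality, and the hypothesis $c^P<\delta\,c^{P_0}$ for every $P\in{\cal U}$, each of the $N$ terms is at most $(1/\eta)\,B\,\delta\,c^{P_0}$ in magnitude, so the whole sum is bounded in absolute value by $(NB\delta/\eta)\,c^{P_0}$.

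Combining the two bounds, the expression in the lemma is at least
\[
c^{P_0}\left(\eta a-\frac{NB\delta}{\eta}\right).
\]
Since $P_0\to P_0'$ is itself one of the reactions of ${\cal R}$, we have $B\ge a>0$, so $NB>0$ and I may take any $\delta$ with $0<\delta<\eta^2 a/(NB)$, for instance $\delta=\eta^2 a/(2NB)$; then the parenthesis is positive and, as $c^{P_0}>0$, the desired strict inequality follows.

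There is no genuine obstacle here — the content is entirely in the quantitative comparison — but two points require care. First, one must note $B\ge a>0$ so that the threshold for $\delta$ is well defined; this holds precisely because $(P_0'-P_0)\cdot{\bf v}>0$ forces the reaction $P_0\to P_0'$ to contribute to the maximum defining $B$. Second, the sum ranges over reactions rather than over source complexes, so a single source $P\in{\cal U}$ may appear in several terms; accounting for this is exactly why the combinatorial factor $N$ (the number of such reactions) enters the bound rather than the cardinality of ${\cal U}$.
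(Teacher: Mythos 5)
Your proposal is correct and follows essentially the same route as the paper: isolate the term carrying $P_0'-P_0$, bound it below by $\eta\,((P_0'-P_0)\cdot{\bf v})\,c^{P_0}$, bound the remaining sum above using $\kappa<1/\eta$ and the hypothesis $c^P<\delta c^{P_0}$, and choose $\delta$ of order $\eta^2$ times the ratio of these constants (the paper takes $\delta=\eta^2 (P_0'-P_0)\cdot{\bf v}\big/\bigl(\vectornorm{{\bf v}}\sum_{P\to P'\in{\cal R}}\vectornorm{P'-P}\bigr)$, i.e.\ Cauchy--Schwarz summed over all reactions, where you use $N\cdot\max|(P'-P)\cdot{\bf v}|$ --- a purely cosmetic difference). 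The only quibble is the degenerate case where no reaction has source in ${\cal U}$, so that $N=0$ and your $\delta$ is undefined; there the conclusion is immediate for any $\delta$, and the paper's choice of denominator sidesteps this automatically.
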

\begin{proof} We take 
$$\delta = \frac{\eta^2 (P'_0-P_0)\cdot \bf{v}}{\vectornorm{{\bf v}}\displaystyle\sum_{P\to P'\in{\cal R}}\vectornorm{P'-P}}$$
\noindent and we have 
\begin{eqnarray*} \label{case2}
&&\left(\kappa_{P_0\to P'_0}(t_0)(P'_0-P_0){c(t_0)}^{P_0}+\sum_{P\to P', P\in {\cal U}}\kappa_{P\to P'}(t_0) (P'-P){c(t_0)}^P\right)\cdot{\bf v}\ge \\\nonumber
&&\ge \kappa_{P_0\to P'_0}(t_0)c(t_0)^{P_0}(P'_0-P_0)\cdot {\bf v} - \sum_{P\to P', P\in {\cal U}} \Bigg(\kappa_{P\to P'}(t_0)\vectornorm{{\bf v}}\vectornorm{P'-P} \delta c(t_0)^{P_0} \Bigg )\ge \\
&&\geq\left( \eta (P'_0-P_0)\cdot {\bf{v}} - \delta(\vectornorm{{\bf v}}/\eta)\sum_{P\to P'\in {\cal R}}\vectornorm{P'-P}\right){c(t_0)}^{P_0} = 0\\\nonumber
\end{eqnarray*} 
\end{proof}

We may now state and prove our main result.
\begin{theorem}\label{thm:main}
Any two-species, endotactic $\kappa$-variable mass-action system is persistent and has bounded trajectories. 
\end{theorem}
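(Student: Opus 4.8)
The plan is to show that the one-parameter family of convex polygons $\mathcal{P}(\alpha)$ constructed in Section~\ref{sec:geom} provides the required confinement, by proving that for a suitable fixed $\alpha$ the region $conv(\mathcal{P}(\alpha))$ is forward-invariant, and that moreover every trajectory eventually enters it. Persistence and boundedness then follow from the covering property~(\ref{covers}): since $\bigcup_\alpha conv(\mathcal{P}(\alpha)) = \mathbb{R}^2_{>0}$ and the hulls are nested, trajectories cannot approach the axes (small $\alpha$) nor escape to infinity (the outer boundary of a single $\mathcal{P}(\alpha_0)$ stays bounded). By Nagumo's theorem, forward-invariance of $conv(\mathcal{P}(\alpha))$ reduces to the \emph{sub-tangentiality condition}: at every boundary point $c(t_0)\in \mathcal{P}(\alpha)$ the velocity $\dot c(t_0)$ given by~(\ref{varMassAct}) must point into the polygon (i.e.\ have nonpositive inner product with the outward normal).

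The heart of the argument is to verify sub-tangentiality on each edge of $\mathcal{P}(\alpha)$. First I would fix a boundary point $c(t_0)$ lying on an edge, say one of the $A_iA_{i+1}$ or $C_iC_{i+1}$ edges orthogonal to $(1,r_i)$, or a $B_jB_{j+1}$, $D_jD_{j+1}$ edge orthogonal to $(1,s_j)$, or a horizontal/vertical edge. Let $\mathbf{v}$ be the inward normal to that edge. The key point is to identify, among all source complexes $P\in\mathcal{SC}(\mathcal{R})$, the one $P_0$ whose monomial $c(t_0)^{P_0}$ is \emph{dominant} --- strictly larger, up to the constant $\delta$, than all others. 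The geometry of the construction is designed precisely for this: the property (P*) ensures that the dotted comparison curves $y=\delta' x^\sigma$, $y=(1/\delta')x^\sigma$ cross only the two edges orthogonal to $(1,\sigma)$, and conditions (P2)--(P5) guarantee that on each edge there is a well-defined ordering of the source monomials. Having found the dominant source $P_0$, the endotactic hypothesis enters: because $\mathcal{R}$ is endotactic, the reaction(s) emanating from the dominant source $P_0$ cannot point out of the swept half-plane, so there is a ``good'' reaction $P_0\to P_0'$ with $(P_0'-P_0)\cdot\mathbf{v}>0$. Then Lemma~\ref{staysInside}, applied with $\mathcal{U}=\mathcal{SC}(\mathcal{R})\setminus\{P_0\}$, shows that the aggregate velocity $\dot c(t_0)$ satisfies $\dot c(t_0)\cdot\mathbf{v}>0$, i.e.\ points strictly inward.

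The main obstacle, and the step requiring the most care, is the bookkeeping that links the edge-by-edge geometry to the dominance/endotactic dichotomy. Concretely, on a generic edge orthogonal to $(1,\sigma)$ I must argue that the source complex extremal in the direction $\mathbf{v}=(1,\sigma)$ (or $-\mathbf{v}$) is exactly the one lying on $esupp^{\mathbf{v}}(\mathcal{R})$, so that the endotactic condition~(\ref{star}) can be invoked to produce the good reaction; simultaneously I must rule out the degenerate situation where two sources tie, which is where the careful choice of the comparison exponents $\{r_i,s_j\}$ (all pairwise slopes) and the separating curves $y=x^{r_{i+.5}}$, $y=x^{s_{j+.5}}$ pays off, forcing a strict monomial ordering. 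The corner/horizontal/vertical edges and the regions near the $\xi$- and $M$-thresholds must be handled as special cases, using conditions (P3) and (P5) to make those comparisons immediate (there one species' monomial simply dominates because the other coordinate is forced very small or very large).

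Once sub-tangentiality is established on every edge, forward-invariance of $conv(\mathcal{P}(\alpha))$ follows for each admissible $\alpha\in(0,\alpha_0]$. Taking $\alpha=\alpha_0$ and using (P1) to ensure $c(0)\in conv(\mathcal{P}(\alpha_0))$ yields boundedness of the trajectory. For persistence, I would argue by contradiction: if $\liminf_{t\to\infty} x_i(t)=0$ for some $i$, the trajectory approaches the axis and hence, by~(\ref{covers}), must leave $conv(\mathcal{P}(\alpha))$ for every sufficiently small $\alpha$; but forward-invariance of each $conv(\mathcal{P}(\alpha))$ containing $c(0)$ forbids this, giving the contradiction. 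Thus every trajectory stays bounded away from $\partial\mathbb{R}^2_{\ge 0}$ and is bounded, proving both claims.
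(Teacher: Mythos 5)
Your overall architecture --- the invariant polygon of Section \ref{sec:geom}, Nagumo's sub-tangentiality criterion checked edge by edge, Lemma \ref{staysInside} to absorb the small terms, and the covering property (\ref{covers}) to conclude persistence and boundedness --- is the same as the paper's. However, there is a genuine gap at the central step. You propose to find, at each boundary point $c(t_0)$, a single source $P_0$ with $c(t_0)^P<\delta\, c(t_0)^{P_0}$ for \emph{all} other sources $P$, and then to apply Lemma \ref{staysInside} with $\mathcal{U}=\mathcal{SC}(\mathcal{R})\setminus\{P_0\}$. Such a $P_0$ does not exist in general. If two or more source complexes lie on a common line perpendicular to the inward normal $\mathbf{n}=(1,\sigma)$ of the edge containing $c(t_0)$ --- that is, they all lie on $esupp^{\mathbf{n}}(\mathcal{R})$ --- then on that edge their monomials differ only by a factor in roughly $(\delta,1/\delta)$: the edge is built precisely to cross the region $\delta' x^{\sigma}<y<(1/\delta')x^{\sigma}$ where those monomials are \emph{comparable} and neither dominates. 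So the ``strict monomial ordering'' you hope the curves $y=x^{r_{i+.5}}$, $y=x^{s_{j+.5}}$ enforce cannot hold among such tied sources; those curves only separate the comparison regions belonging to \emph{different} exponents $\sigma$, they do not break ties within a single one. This already bites for a single reversible pair $P\rightleftharpoons P'$, whose two sources determine the slope of the very edge in question.

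The repair is the decomposition the paper actually uses. First discard the reactions orthogonal to $\mathbf{n}$, which contribute nothing to $\dot c(t_0)\cdot\mathbf{n}$. Then split the remaining sum into (i) reactions whose source lies on $esupp^{\mathbf{n}}(\mathcal{R})$ and (ii) reactions whose source lies in $esupp^{\mathbf{n}}(\mathcal{R})_{>0}$. The tied, mutually non-dominated sources all fall into group (i), and for them no monomial comparison is needed: the endotactic condition (\ref{star}) gives $(P'-P)\cdot\mathbf{n}\ge 0$ directly, so each such term contributes nonnegatively to $\dot c(t_0)\cdot\mathbf{n}$ whatever the size of its monomial, and the distinguished reaction $P_{\mathbf{n}}\to P'_{\mathbf{n}}$ (which exists because $esupp^{\mathbf{n}}$ is defined through $\mathcal{R}_{\mathbf{n}}$) contributes strictly positively. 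Only the sources in group (ii) need to be dominated by $c(t_0)^{P_{\mathbf{n}}}$; for those the strict inequality $c(t_0)^P<\delta\, c(t_0)^{P_{\mathbf{n}}}$ does hold and is exactly what the region-by-region analysis using (P3) and (P5) verifies, and Lemma \ref{staysInside} is then applied with $\mathcal{U}=\mathcal{SC}(\mathcal{R})\cap esupp^{\mathbf{n}}(\mathcal{R})_{>0}$ rather than $\mathcal{SC}(\mathcal{R})\setminus\{P_0\}$. With this modification the rest of your outline --- forward invariance of $conv(\mathcal{P}(\alpha_0))$, boundedness, and persistence via (\ref{covers}) --- is sound.
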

\begin{proof} Recall that $\bf V$ denotes the set of inward normal vectors to the sides of $conv({\cal{SC(R)}}).$ Let $\bf n\in\bf V\cup\{\pm \bf i, \pm\bf j\},$ and let $P_{\bf n}\to P'_{\bf n}$ denote a fixed reaction such as  
$P_{\bf n}\in esupp^{\bf n}(\cal R)$ and $P'_{\bf n}\in esupp^{\bf n}({\cal R})_{>0}.$ Since $\cal R$ is endotactic, such a reaction exists for all $\bf n\in \bf V\cup\{\pm \bf i, \pm\bf j\}.$

Note that including $\{\pm\bf i, \pm\bf j\}$ here guarantees that at least one vector  $\bf n\in \bf V \cup\{\pm\bf i, \pm\bf j\}$ satisfies $esupp^{\bf n}({\cal R})\neq\O.$  For every such vector $\bf n$ we let $\delta_{\bf n}$ denote the constant from Lemma \ref{staysInside} that corresponds to reaction $P_{\bf n}\to P'_{\bf n},$ $\bf v = \bf n$ and ${\cal U}={\cal{SC(R})}\cap esupp^{\bf n}({\cal R})_{>0}$ and 
\begin{equation}\label{delta} 
\delta = \min_{\bf n} \delta_{\bf n}.
\end{equation}
\noindent Then $\delta<1$ and the construction from section \ref{sec:geom} yields the convex polygon 
$${\cal P}(\alpha_0)= A_1\ldots A_{e+1}B_1\ldots B_{f+1}C_1\ldots C_{e+1}D_1\ldots D_{f+1} = \cal P\text{ for simplicity of notation}.$$ 
We view $\cal P$ as the union of its sides, so that ${\cal P} = \partial (conv(\cal P)).$ By construction, $c(0)\in (\xi,M)^2\subset conv(\cal P).$ We show that $conv(\cal P)$ is forward invariant for the dynamics (\ref{def:massAct}), i.e. $c(t)\in conv(\cal P)$ for all $t\geq 0.$ More precisely, we prove that  $c(t)$ cannot cross to the exterior of $\cal P$ by showing that if $c(t_0)\in {\cal P}$ for some $t_0\geq 0,$ then
\begin{equation}\label{mainIneq}
\dot c(t_0)\cdot {\bf n} \geq 0 \text{  whenever } c(t_0)\in {\cal P} 
\end{equation}
where $-{\bf n}$ is a generator of the normal cone $N_{\cal P}(c(t_0))$ of $\cal P$ at $c(t_0).$ Recall that 
$$N_{\cal P}(c(t_0))=\{{\bf v}\in \mathbb R^2\mid {\bf v}\cdot ({\bf x}-c(t_0)))\le 0 \text{ for all } {\bf x}\in \mathrm{conv}(\cal P)\}.$$  
Note that $n$ is orthogonal to one of the sides of $\cal P$ and points inside $\cal P.$ 

Inequality (\ref{mainIneq}) is rewritten as  
\begin{equation}\label{newMainIneq}
\left(\sum_{P\to P'\in {\cal R}_{\bf n}}\kappa_{P\to P'}(t_0) (P'-P)c(t_0)^P\right)\cdot{\bf n}\geq 0
\end{equation}

\noindent and  is satisfied if ${\cal R}_{\bf n}=\O.$ Otherwise, let $P_{\bf n}\to P'_{\bf n}\in{\cal R}_{\bf n}$ with $P_{\bf n}\in esupp^{\bf n}(\cal R)$. We rewrite the left hand side of (\ref{newMainIneq}) by separating the reactions with source on the  ${\bf n}$~-~essential support of $\cal R$ and further isolating the effect of the reaction $P_{\bf n}\to P'_{\bf n}:$
\begin{eqnarray}\label{eq:cdot}\nonumber
&&\left(\sum_{P\to P'\in {\cal R}_{\bf n}}\kappa_{P\to P'}(t_0) (P'-P)c(t_0)^P\right)\cdot {\bf n}=\\\nonumber
&&=\Bigg(\sum_{\substack{{P\to P'\neq P_{\bf n}\to P'_{\bf n}}\\{P\in esupp^{{\bf n}}({\cal R})}}}\kappa_{P\to P'}(t_0) (P'-P)c(t_0)^{P}+\\\nonumber
&&+\kappa_{P_{\bf n}\to P'_{\bf n}}(t_0) (P'_{\bf n}-P_{\bf n})c(t_0)^{P_{\bf n}}
+\sum_{P\to P'\in {\cal R}_{\bf n}, P\notin esupp^{{\bf n}}({\cal R})}\kappa_{P\to P'}(t_0) (P'-P)c(t_0)^P\Bigg)\cdot{\bf n}.\\\nonumber
\end{eqnarray}

\noindent Since all source complexes of ${\cal R}_{\bf n}$ lie in $esupp^{{\bf n}}({\cal R}_{\bf n})_{\ge 0}$, the reaction vector $P'-P$ with source $P\in esupp^{{\bf n}}(\cal R)$ satisfies $(P'-P) \cdot  {\bf n}\geq 0.$ Note that for the reaction $P_{\bf n}\to P'_{\bf n}$ the inequality is strict,
$$(P'_{\bf n}-P_{\bf n})\cdot {\bf n} > 0.$$

It is therefore enough to show that
\begin{equation}\label{eq:reduced}
\left(\kappa_{P_{\bf n}\to P'_{\bf n}}(t_0) (P'_{\bf n}-P_{\bf n})c(t_0)^{P_{\bf n}}+\sum_{\substack{{P\to P'\in{\cal R}_{\bf n}}\\ {P\notin esupp^{{\bf n}}({\cal R})}}}\kappa_{P\to P'}(t_0) (P'-P)c(t_0)^P\right)\cdot {\bf n} \geq 0
\end{equation}

\noindent in order to verify (\ref{newMainIneq}). In turn, (\ref{eq:reduced}) will follow from Lemma \ref{staysInside} with ${\cal U}={\cal{SC(R)}}\cap esupp^{{\bf n}}({\cal R})_{>0}$ and the fact that 
\begin{equation}\label{ineq}
\delta c(t_0)^{P_{\bf n}} > c(t_0)^P
\end{equation}
\noindent for all $P\in \cal U,$ inequality whose verification will complete the proof of the theorem. 

The proof of inequality (\ref{ineq}) depends on the position of $c(t_0)$ on $\cal P.$ More precisely, $c(t_0)$ might belong to one of the four polygonal lines $[A_1\ldots A_{e+1}],$ $[B_1\ldots B_{f+1}]$, $[C_1\ldots C_{e+1}]$ and $[D_1\ldots D_{f+1}]$ or to one of the four remaining sides of $\cal P.$ We discuss the first case,  $c(t_0)\in [A_1\ldots A_{e+1}].$ Suppose $c(t_0)=(x(t_0),y(t_0))$ lies on the side $[A_iA_{i+1}].$ Then (see Figure \ref{Fig:colt}) we have 
$$(1/\delta')x(t_0)^{r_{i+1}} < y(t_0) < \delta' x(t_0)^{r_{i-1}},$$
where only one side of the inequality holds if $i=1$ or $i=e.$

%
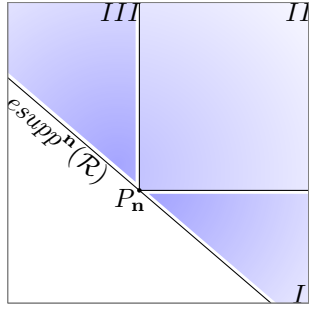
\begin{figure}
\begin{center}
\begin{tikzpicture}[>=stealth, scale =.25]

\begin{scope}
\clip (0,0) rectangle+(16,16);

\begin{scope}[shift = {(.5,-.18)}] 
\clip (7,6)--(16,6)--(16,0)--(14,0);
\shadedraw[inner color = blue!40!white,outer color=white,draw=white] (-9,-8) rectangle +(30,30);
\end{scope}

\begin{scope}[shift = {(-.2,.45)}] 
\clip (7,6)--(0,12)--(0,16)--(7,16);
\shadedraw[inner color = blue!35!white,outer color=white,draw=white] (-9,-8) rectangle +(30,30);
\end{scope}

\begin{scope}
\clip (7,6) rectangle + (10,10);
\shadedraw[inner color = blue!25!white,outer color=white,draw=white] (-9,-8) rectangle +(30,30);
\end{scope}

\node[rotate = -40] at (2.6,8.7) {$esupp^{\bf n}(\cal R)$};
\node at (6.5,5.5) {$P_{\bf n}$};
\fill[black] (7,6) circle (.12);
\node at (15.5,.5) {$I$};
\node at (15.5,15.5) {$II$};
\node at (6,15.5) {$III$};

\draw(0,12)--(14,0);
\draw(7,16)--(7,6)--(16,6);
\draw[gray](0,0)--(0,16)--(16,16)--(16,0)--(0,0); 
\draw[gray](0,0)--(0,16)--(16,16)--(16,0)--(0,0); 

\end{scope}

\end{tikzpicture} 
\end{center}
\caption{Positions of a source monomial $P$ relative to $P_{\bf n}$}\label{Fig:relPos}
\end{figure}

Let $P=(m,n)$ and recall the notation $P_{\bf n}=(m_{\bf n},n_{\bf n}).$ We let $\sigma = \frac{m-m_{\bf n}}{n_{\bf n}-n}$ if $n_{\bf n}\neq n.$ Relative to $P_{\bf n}$, the source complex $P$ may belong to one of the three shaded regions in Figure \ref{Fig:relPos}.

\noindent {\em Region I.} Here $m>m_{\bf n}$ and $n<n_{\bf n}.$ Also, $\sigma > r_i$ and therefore  $\sigma \ge r_{i+1}$ and we have   
$$y(t_0) > (1/\delta') x(t_0)^{r_{i+1}}\ge(1/\delta)^{1/(n_{\bf n} - n)}x(t_0)^{\sigma},$$
\noindent which implies $\delta x(t_0)^{m_{\bf n}} y(t_0)^{n_{\bf n}}\ge x(t_0)^m y(t_0)^n.$

\noindent {\em Region II.} If $m\neq m_{\bf n}$ and  $n\neq n_{\bf n}$ we have $m>m_{\bf n},\ n>n_{\bf n}$ and $\sigma<0.$ Condition (P3) implies
$$y(t_0) < \delta' x(t_0)^{\sigma} < \delta^{1/(n-n_{\bf n})}x(t_0)^{\sigma},$$
which again implies $x(t_0)^m y(t_0)^n\le \delta x(t_0)^{m_{\bf n}} y(t_0)^{n_{\bf n}}.$ If $m = m_{\bf n}$ or $n = n_{\bf n}$ we need to show $\delta > y^{n - n_{\bf n}}$ or $\delta > x^{m - m_{\bf n}},$ which are immediate from (P5).

\noindent {\em Region III.} Here $m<m_{\bf n}$ and $n>n_{\bf n}.$ We have $\sigma < r_i,$ so  $\sigma \le r_{i-1}$ and 
$$y(t_0) < \delta' x(t_0)^{r_{i-1}} \le \delta^{1/(n-n_{\bf n})}x(t_0)^{\sigma},$$
\noindent therefore $\delta x(t_0)^{m_{\bf n}} y(t_0)^{n_{\bf n}} > x(t_0)^m y(t_0)^n.$

The cases when $c(t_0)$ lies on the other three polygonal lines $[B_1\ldots B_{f+1}]$, $[C_1\ldots C_{e+1}]$ and  $[D_1\ldots D_{f+1}]$ are similar. Finally, suppose that $c(t_0)$ is on one of the four remaining sides, $[A_{e+1}B_1]$, $[B_{f+1}C_1]$, $[C_{e+1}D_1]$ and $[D_{f+1}A_1],$ for instance $c(t_0)\in [A_{e+1}B_1].$  Then we must have $n > n_{\bf n}$ and  
$$y(t_0) < \delta' x(t_0)^{\sigma} < \delta ^{1/(n-n_{\bf n})}x(t_0)^{\sigma},$$      
or  $x(t_0)^m y(t_0)^n  < \delta x(t_0)^{m_{\bf n}}y(t_0)^{n_{\bf n}}.$
\end{proof}

\begin{remark}\label{remarca} 
Note that if the inclusion $\kappa(t)\in(\eta,1/\eta)^{\cal R}$ is not necessarily true for all $t\ge 0,$ but holds for $t=t_0$ then the inequality (\ref{mainIneq}) is true (at time $t_0$):
\begin{equation}
\left(\sum_{P\to P'\in {\cal R}}\kappa_{P\to P'}(t_0) (P'-P)c(t_0)^P\right)\cdot{\bf n}\geq 0
\end{equation}
\end{remark}

\textbf{Lower-endotactic networks.} A useful observation can be made from the proof of Theorem \ref{thm:main}. Namely, we need not require that the network is endotactic in order to conclude that a {\em bounded} trajectory $T(c_0)$ is persistent. Instead, it is enough that the network is merely {\em lower-endotactic}, meaning that it passes the parallel sweep test for vectors that point inside the closed positive quadrant.  Let ${\bf V}_L$ be the set of inward normal vectors to the sides of strictly negative slope of ${\cal SC(R)}$  (see Proposition \ref{prop:endo}). In other words, a network is lower-endotactic if condition (\ref{star}) is satisfied for all vectors ${\bf v}\in{\bf V}_L\cup\{\bf i, \bf j\}.$ Indeed, we may construct the invariant polygon $\cal P$ such that the square $[0, M]^2$ contains $T(c_0).$ Then, if $c(t)$ reaches $\cal P,$ it must be on the polygonal line $[D_{f+1}A_1A_2\ldots A_{e+1}B_1].$ and the proof of (\ref{mainIneq}) only requires that $\cal R$ is lower endotactic. We cast this result as a corollary to the proof of Theorem \ref{thm:main}.
\begin{corollary}\label{cor:boundedPers}
 Any bounded trajectory of a lower-endotactic $\kappa$-variable mass-action system with two species is persistent.
\end{corollary}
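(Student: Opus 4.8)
The plan is to re-run the construction behind Theorem \ref{thm:main}, using the a priori boundedness of $T(c_0)$ to confine attention to the part of the invariant polygon that lies near the coordinate axes, where only the lower-endotactic hypothesis is needed. First I would exploit boundedness: since $T(c_0)$ is bounded, fix $M>1$ so large (and $\xi<1$ correspondingly small) that $T(c_0)\subset[0,M]^2$ while $M$ and $\xi$ still satisfy (P1)--(P5) of Section \ref{sec:geom}. Then I would carry out the construction of Section \ref{sec:geom} verbatim, obtaining the one-parameter family ${\cal P}(\alpha)$ with the nesting and covering property (\ref{covers}); the only change is that the constant $\delta$ is taken to be the minimum of the constants $\delta_{\bf n}$ from Lemma \ref{staysInside} over the (finitely many) inward normals ${\bf n}$ that occur along the polygonal chain $[D_{f+1}A_1A_2\ldots A_{e+1}B_1]$. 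Every such ${\bf n}$ points into the closed positive quadrant: the flanking sides $D_{f+1}A_1$ and $A_{e+1}B_1$ contribute (in the limit $\alpha\to 0$) the normals ${\bf i}$ and ${\bf j}$, while each interior side $A_iA_{i+1}$ contributes $(1,r_i)$ with $r_i>0$. For each of these ${\bf n}$ the ``good'' reaction $P_{\bf n}\to P'_{\bf n}$ with $P_{\bf n}\in esupp^{\bf n}({\cal R})$ and $P'_{\bf n}\in esupp^{\bf n}({\cal R})_{>0}$ exists by a lower-endotactic analogue of Proposition \ref{prop:endo}: running the argument of that proposition with a sweep vector ${\bf v}$ in the open first quadrant, $esupp^{\bf v}({\cal R})$ meets $conv({\cal{SC(R)}})$ only along edges of strictly negative slope, whose inward normals lie in ${\bf V}_L$, so condition (\ref{star}) for ${\bf V}_L\cup\{{\bf i},{\bf j}\}$ already forces (\ref{star}) for every first-quadrant ${\bf v}$.

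Next I would trap the trajectory. By (\ref{covers}) the sets $conv({\cal P}(\alpha))$ increase as $\alpha\downarrow 0$ and exhaust $\mathbb R_{>0}^2$, so I may fix $\alpha\in(0,\alpha_0]$ with $c(0)\in conv({\cal P}(\alpha))$, and I claim $T(c_0)\subset conv({\cal P}(\alpha))$. If this failed there would be a first time $t_0$ with $c(t_0)\in{\cal P}(\alpha)$ and $\dot c(t_0)\cdot{\bf n}<0$, where $-{\bf n}$ generates the normal cone at $c(t_0)$. But $T(c_0)\subset[0,M]^2$, and by the placement of the four vertex groups of ${\cal P}(\alpha)$ in the four shaded corner regions of Figure \ref{Fig:bigPicture} --- every vertex other than $A_1,\dots,A_{e+1}$ has a coordinate exceeding $M$ --- the set ${\cal P}(\alpha)\cap[0,M]^2$ is contained in the chain $[D_{f+1}A_1\ldots A_{e+1}B_1]$. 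Hence $c(t_0)$ lies on this chain and ${\bf n}$ is one of the first-quadrant normals above. For such ${\bf n}$ the verification of (\ref{mainIneq}) is word for word that of Theorem \ref{thm:main}: expand $\dot c(t_0)\cdot{\bf n}$ as in (\ref{eq:cdot}), drop the nonnegative contributions of the sources lying on $esupp^{\bf n}({\cal R})$, reduce to (\ref{eq:reduced}), establish the dominance inequality (\ref{ineq}), $\delta\,c(t_0)^{P_{\bf n}}>c(t_0)^{P}$ for all $P\in{\cal{SC(R)}}\cap esupp^{\bf n}({\cal R})_{>0}$, through the Region I/II/III case analysis (which uses only that $c(t_0)$ sits on an $A$-side together with (P2), (P3), (P5)), and conclude $\dot c(t_0)\cdot{\bf n}\ge 0$ from Lemma \ref{staysInside}. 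This contradicts $\dot c(t_0)\cdot{\bf n}<0$, so no exit time exists and $T(c_0)\subset conv({\cal P}(\alpha))$.

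Finally, $conv({\cal P}(\alpha))$ is a fixed compact polygon all of whose vertices have strictly positive coordinates, so it is bounded away from both axes by some $\varepsilon(\alpha)>0$; therefore $\liminf_{t\to\infty}x_i(t)\ge\varepsilon(\alpha)>0$ for $i=1,2$, which is persistence. The genuinely new step --- and the one I expect to be the main obstacle --- is the geometric reduction asserting that a bounded trajectory can meet ${\cal P}(\alpha)$ only along the chain $[D_{f+1}A_1\ldots A_{e+1}B_1]$, together with the accompanying lower-endotactic version of Proposition \ref{prop:endo}. These are exactly what let us discard the sweep test for the ``upper'' vectors ${\bf V}\setminus{\bf V}_L$ and $-{\bf i},-{\bf j}$, which would otherwise be needed to control the $C$- and $D$-sides of ${\cal P}$; making them precise reduces to comparing the square $[0,M]^2$ with the vertices of ${\cal P}(\alpha)$, which is guaranteed by (P3)--(P4).
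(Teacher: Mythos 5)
Your proposal is correct and follows essentially the same route as the paper: enlarge the polygon so that $[0,M]^2$ contains the bounded trajectory, observe that the trajectory can then meet ${\cal P}$ only along the chain $[D_{f+1}A_1\ldots A_{e+1}B_1]$, whose inward normals lie in the closed first quadrant, and note that the verification of (\ref{mainIneq}) for those normals uses only the lower-endotactic condition. The extra details you supply (the first-quadrant analogue of Proposition \ref{prop:endo} and the explicit comparison of $[0,M]^2$ with the vertex locations) are exactly what the paper leaves implicit.
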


\subsection{Permanence}\label{subsection:perm} The proof of Theorem \ref{thm:main} allows us to say more about the dynamics of (\ref{varMassAct}). Indeed, without much additional effort we can prove the following stronger version of Theorem \ref{thm:main}:
\begin{theorem}\label{thm:perm}
Any two-species endotactic $\kappa$-variable reaction system is permanent.
\end{theorem}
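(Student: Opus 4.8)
The plan is to derive permanence from Theorem \ref{thm:main} by upgrading the forward-invariant polygon into a single \emph{globally attracting} compact set that is independent of the initial condition. The key observation is that the constant $\delta$ of (\ref{delta}), and hence the entire one-parameter family ${\cal P}(\alpha)$, $\alpha\in(0,\alpha_0]$, is determined only by the reaction vectors of $\cal R$ and by the bound $\eta$; it does not depend on $c(0)$. I would therefore take $K=conv({\cal P}(\alpha_0))$, the smallest polygon in the family, which is a fixed compact subset of $\mathbb R_{>0}^2$ bounded away from both axes, and prove that every positive trajectory enters $K$ in finite time and never leaves. Forward invariance of $K$ is already contained in Theorem \ref{thm:main}, so the entire content lies in the attraction statement; once it is established, choosing $\epsilon$ with $K\subset[\epsilon,1/\epsilon]^2$ yields permanence on each stoichiometric compatibility class $\Sigma$ with $S=\mathbb R^2$, while $K\cap\Sigma$ plays the role of the attracting set inside the relative interior of $\Sigma$ when $S$ is lower dimensional.

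The first technical step is to promote the invariance inequality (\ref{mainIneq}) to a \emph{strict} inward estimate with a uniform margin. I would construct the family using a constant $\delta^*$ chosen strictly smaller than the $\delta$ of (\ref{delta}); the polygons so obtained are still forward invariant, since a smaller comparison constant only makes the position analysis in the proof of Theorem \ref{thm:main} easier. Now, however, on the boundary $\partial conv({\cal P}(\alpha))$ the bound (\ref{ineq}) holds with slack, $c(t_0)^P\le\delta^* c(t_0)^{P_{\bf n}}$, and feeding this into the very chain of inequalities of Lemma \ref{staysInside} gives $\dot c(t_0)\cdot{\bf n}\ge(\delta-\delta^*)\,(\vectornorm{{\bf n}}/\eta)\sum_{P\to P'\in{\cal R}}\vectornorm{P'-P}\,c(t_0)^{P_{\bf n}}>0$ whenever ${\cal R}_{\bf n}\neq\O$. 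Since on any compact annular region of $\mathbb R_{>0}^2$ the finitely many source monomials $c^{P_{\bf n}}$ are bounded below by a positive constant, this produces a uniform lower bound $m>0$ on the inward normal component of the flow along the whole family.

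For the attraction step I would fix a positive $c(0)$ and, using the covering property (\ref{covers}), pick $\alpha_1\in(0,\alpha_0]$ with $c(0)\in conv({\cal P}(\alpha_1))$. Setting $\alpha(c)=\sup\{\alpha: c\in conv({\cal P}(\alpha))\}$, the nesting and forward invariance of the polygons make $t\mapsto\alpha(c(t))$ non-decreasing. On the compact annulus $conv({\cal P}(\alpha_1))\setminus int\,conv({\cal P}(\alpha_0))$ the uniform strict inward bound $m>0$ of the previous paragraph, valid because ${\cal P}(\alpha)$ varies continuously and monotonically with $\alpha$, forces $\alpha(c(t))$ to increase at a rate bounded below; hence $c(t)$ reaches $K=conv({\cal P}(\alpha_0))$ in finite time and stays there by forward invariance. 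This delivers the uniform compact attractor and therefore permanence, after treating the compatibility class structure: when $S=\mathbb R^2$ the argument applies verbatim, and when $S$ is one-dimensional the same crossing argument is run inside $\Sigma$, with $K\cap\Sigma$ a compact subset of the relative interior of $\Sigma$ since $K$ is bounded away from the axes that carry the endpoints of $\Sigma$.

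The main obstacle is to make the passage ``uniform inward crossing $\Rightarrow$ finite entry time'' rigorous. Because the system is non-autonomous ($\kappa$ varies with $t$), one cannot invoke invariance of $\omega$-limit sets or a LaSalle-type argument to rule out accumulation on an intermediate boundary $\partial conv({\cal P}(\beta))$ with $\beta<\alpha_0$; the finite-time entry must be extracted purely from the uniform positive transversal speed across the continuously varying family (\ref{covers}), with extra care at the vertices of the polygons, where $\alpha(\cdot)$ is not differentiable and the inward-pointing condition must be read off the normal-cone formulation $N_{\cal P}(c(t_0))$. A secondary issue is the degenerate case ${\cal R}_{\bf n}=\O$, corresponding to a lower-dimensional stoichiometric subspace $S$: there the flow is tangent to ${\bf n}$, so the crossing estimate should be carried out within $\Sigma$ using only the component of $\dot c$ along $S$.
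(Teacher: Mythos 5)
Your proposal follows essentially the same route as the paper's proof: a nested one-parameter family of forward-invariant polygons (\ref{covers}), a level function $\alpha(c(t))$ that must increase at a uniformly positive rate on the compact region between ${\cal P}(\alpha_1)$ and ${\cal P}(\alpha_0)$, with strictness obtained by rerunning the chain of inequalities in Lemma \ref{staysInside} with slack in (\ref{ineq}) and bounding the finitely many source monomials below on a compact subset of $\mathbb R_{>0}^2$. The one step you explicitly leave open --- converting the uniform transversal speed into finite entry time despite the non-differentiability of the level function at the polygon vertices --- is precisely what the paper supplies via Rockafellar's nonsmooth calculus: the level function $\Phi$ is strictly continuous, its subgradient at a kink is the segment joining the two one-sided gradients (\ref{subgrad}), the chain rule (\ref{chain}) gives $\inf_{t\ge 0}\partial(\Phi\circ c)(t)>\zeta$, and a generalized mean value theorem then yields $\Phi(c(t))\ge\Phi(c(0))+\zeta t$, contradicting $\Phi(c(t))\in[\alpha_1,\alpha_0]$.
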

\begin{proof} Suppose that for some small $\epsilon>0$ and for some $\alpha_0>0$ we can construct the polygon ${\cal P}(\alpha_0+\epsilon).$ Recall that the polygon ${\cal P}= {\cal P}(\alpha)$  depends continuously on $\alpha$ and the function $conv({\cal P}):(0,\alpha_0+\epsilon)\to \mathbb R$ is strictly decreasing, i.e.  $conv({\cal P}(\alpha))\supset conv({\cal P}(\alpha'))$ whenever $\alpha<\alpha'<\alpha_0+\epsilon.$ Let $T(c(0))$ denote a trajectory of (\ref{varMassAct}) with initial condition $c(0).$ We will show that the trajectory $T(c(0))$ eventually ends up in $conv({\cal P}(\alpha_0)).$ 

Since  $\bigcup_{\alpha\in(0,\alpha_0+\epsilon]}conv({\cal P}(\alpha))=\mathbb R_{>0}^2$ we may choose $0<\alpha_1<\alpha_0$ such that $c(0)\in \bigcup_{\alpha\in[\alpha_1,\alpha_0+\epsilon]} conv({\cal P}(\alpha)).$ 
Let  $\Phi:\bigcup_{\alpha\in[\alpha_1,\alpha_0+\epsilon]} {\cal P}(\alpha)\to [\alpha_1,\alpha_0+\epsilon]$ be defined as $\Phi (x,y)=\alpha$ if $(x,y)\in{\cal P}(\alpha).$ We need to show that $\Phi(c(t))\ge\alpha_0$ for large enough $t$. Suppose this was false; then, since $\Phi^{-1}[\alpha_0,\infty)=conv({\cal P}(\alpha_0))$ and $\Phi^{-1}[\alpha_1,\infty)=conv({\cal P}(\alpha_1))$ are both forward invariant sets for (\ref{varMassAct}), we must have that $\Phi(c(t))\in[\alpha_1,\alpha_0]$ for all $t\ge 0.$

Note that $\Phi$ is differentiable on its domain except at the points on the fractional index curves $y = x^\sigma$ for all $\sigma\in \{r_{.5},\ldots, r_{e+.5},s_{.5},\ldots, s_{f+.5}\}.$ Let ${\bf c}\in\mathbb R_{>0}^2$ be such a point, and let $\Phi_1$ and $\Phi_2$ denote the smooth functions that define $\Phi$ on the two sides of the fractional curve in a neighborhood of $\bf c.$ The subgradient  \cite[Definition 8.3]{Rocka} of $\Phi$ at $\bf c$ is 
\begin{equation}\label{subgrad}
\partial \Phi({\bf c})=\{a\nabla\Phi_1({\bf c})+(1-a) \nabla\Phi_2({\bf c})\mid a\in[0,1]\}
\end{equation}
\noindent and therefore $\Phi$ is strictly continuous \cite[Definition 9.1]{Rocka}. Note that (\ref{subgrad}) is also valid for points $\bf c$ that do not belong to a fractional index curve and in that case $\nabla \Phi_1({\bf c})=\nabla \Phi_2({\bf c})=\nabla \Phi({\bf c}).$ Since $c(t)$ is smooth, it follows that $\Phi\circ c(t)$ is also strictly continuous; in particular, a generalized mean value theorem \cite[Theorem 10.48]{Rocka} implies that for all $t>0$ there is $\tau_t\in[0,t]$ such that 
\begin{equation}\label{meanVal}
\Phi(c(t))-\Phi(c(0)) = s_t t  \text{ for some scalar } s_t\in \partial(\Phi\circ c)(\tau_t)
\end{equation}
The chain rule for subgradients \cite[Theorem 10.6]{Rocka}
\begin{equation}\label{chain}
\partial(\Phi\circ c)(t)\subseteq \{{\bf v}\cdot \dot c(t)\mid {\bf v} \in\partial \Phi(c(t))\}
\end{equation}
connects the subgradient of $\Phi\circ c$ to that of $\Phi.$ Note that the proof of Lemma \ref{staysInside} can be modified to show that for any compact set $K\subset\mathbb R_{>0}^2$ there exists $\zeta>0$ such that$\nabla \Phi_1(c(t))\cdot \dot c(t)>\zeta$ and $\nabla \Phi_2(c(t))\cdot \dot c(t)>\zeta$ for $c(t)\in K.$ This, together with the special form (\ref{subgrad}) of $\partial \Phi,$ and together with (\ref{chain}) shows that 
$$\inf _{t\ge 0}\partial(\Phi\circ c)(t)>\zeta.$$  

From ({\ref{meanVal}}) we obtain 
$$\Phi(c(t))>\Phi(0)+\zeta t \text{ for all }t > 0,$$
which contradicts the fact that $\Phi(c(t))\in[\alpha_1,\alpha_0]$ for all $t\ge 0.$
\end{proof} 

\noindent {\bf Remark} ({\em existence of positive equilibria for two-species autonomous endotatic mass-action systems}). The existence of a positive equilibrium has been shown for weakly reversible mass-action systems in \cite{chinese}.  For endotactic networks we may observe that if the rate constant function depends on both time and the phase point $c$, $\kappa = \kappa(t,c)\in(\eta,1/\eta)$ then the set of all possible trajectories is still the same as for $\kappa=\kappa(t),$ and our persistence and permanence results still hold. In particular, if  $\kappa=\kappa(c)\in(\eta,1/\eta)$
for all $c\in\mathbb R _{>0}^2$, then Theorem \ref{thm:perm} together with the Brouwer Fixed Point Theorem guarantees the existence of a positive equilibrium in $conv({\cal P}(\alpha_0)).$ 

\section{The Global Attractor Conjecture for three-species networks} 
As mentioned in the Introduction, the Global Attractor Conjecture concerns the asymptotic behavior of complex-balances mass-action systems and is of central importance in Chemical Reaction Network Theory.

\begin{definition}
A point $c_*\in \mathbb R_{\ge 0}^n$ of a mass-action system $({\cal R}, {\cal S}, {\cal C}, \kappa)$ is  called complex-balanced equilibrium if the aggregate flow at each complex of $\cal R$ is zero. More precisely, for each $P_0\in{\cal C}$ we have
$$\sum_{P\to P_0}\kappa_{P\to P_0}{c_*}^P=\sum_{P_0\to P}\kappa_{P_0\to P}{c_*}^{P_0}.$$
A complex-balanced system is a mass-action system that admits a strictly positive complex-balanced equilibrium. 
\end{definition}

A complex-balanced system is necessarily weakly reversible \cite{cinci}.  A very important class of complex-balanced mass-action systems is that of weakly reversible, {\em deficiency zero} systems \cite{cinci, cincijumate}. The deficiency of a network is given by $n-l-s,$
\noindent where $n,$ $l$ and $s$ denote the number of complexes, the number of linkage classes and the dimension of the stoichiometric subspace. Computing the deficiency of a network is straightforward, as opposed to checking the existence of a complex-balanced equilibrium. 

It is known that  a complex-balanced system admits a unique positive equilibrium $c_{\Sigma}$ in each stoichiometric compatibility class $\Sigma$ and this equilibrium is complex-balanced \cite{cinci}. Moreover, each such equilibrium  admits a strict Lyapunov function which guarantees that  $c_{\Sigma}$ is locally asymptotically  stable with respect to ${\Sigma}$ \cite{cinci, paispe}. 
Recall from the Introduction that the Global Attractor Conjecture states that $c_{\Sigma}$ is in fact {\em globally} asymptotically  stable with respect to ${\Sigma}.$

It is known that for complex-balanced mass-action systems, all trajectories with positive initial condition converge to the set of equilibria \cite{sase}. As a consequence, showing that the unique positive equilibrium $c_{\Sigma}$ in the stoichiometric compatibility class $\Sigma$ is globally attractive amounts to showing that no trajectories with positive initial conditions have $\omega$-limit points on the boundary of $\Sigma$. 

In particular, any condition that guarantees the non-existence of boundary equilibria for a stoichiometric compatibility class $\Sigma$ implies that $c_{\Sigma}$ is globally attractive. 
For instance, it was shown that a face $F$ of $\Sigma$ contains boundary equilibria only if the species corresponding to coordinates that vanish on $F$ form a {\em semilocking set (siphon in Petri nets terminology)} \cite{angeli, sapte, noua}. It follows that if no face of $\Sigma$ corresponds to a semilocking set then $\Sigma$ does not have boundary equilibria and $c_{\Sigma}$ is globally attractive. Algebraic methods for computing siphons have been devised in \cite{ShiuSturmfels}.

Some progress has been made for the remaining case where boundary equilibria cannot be ruled out. 
It was shown that $\omega$-limit points cannot be vertices of $\Sigma$ \cite{opt, sapte} and that codimension-one faces of $\Sigma$ are ``repelling" \cite{noua}. As a consequence, {\em  the Gobal Attractor Conjecture holds for systems with two-dimensional stoichiometric subspace} \cite{noua}. For systems with three-dimensional and higher-dimensional stoichiometric subspace the conjecture is  open. In particular, previous results do not apply to the general three-species systems.  In what follows we show the conjecture to be true in this case.

\begin{theorem}\label{gac3sp}
The Global Attractor Conjecture holds for three-species networks.
\end{theorem}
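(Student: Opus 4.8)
The plan is to reduce the Global Attractor Conjecture to a persistence statement and then eliminate boundary $\omega$-limit points stratum by stratum, invoking the two-species machinery of this paper exactly where the existing literature leaves a gap. First I would recall from \cite{sase} that for a complex-balanced mass-action system every positive trajectory converges to the set of equilibria, so that the conjecture for a stoichiometric compatibility class $\Sigma$ is equivalent to showing that no positive trajectory has an $\omega$-limit point on $\partial\Sigma$. Since a complex-balanced system is weakly reversible, hence endotactic by Lemma \ref{domReact}, and since the complex-balanced Lyapunov function has compact sublevel sets inside $\Sigma$, each trajectory is bounded and its $\omega$-limit set $\Omega$ is nonempty, compact, connected, and invariant.

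Next I would split into cases according to $d=\dim S$. If $d\le 2$ the result is already contained in the Anderson--Shiu theorem for two-dimensional stoichiometric subspaces \cite{noua}, so the substantive case is $d=3$, where $\Sigma=\mathbb R^3_{\ge 0}$ and its boundary decomposes into the vertex (the origin), the relative interiors of the three coordinate axes, and the relative interiors of the three coordinate planes. The origin cannot be an $\omega$-limit point by \cite{opt, sapte}, and the relative interiors of the codimension-one coordinate planes are excluded by the ``repelling'' result of \cite{noua}. Thus the only boundary strata that can carry an $\omega$-limit point are the open coordinate axes, and this is precisely where the two-species results enter.

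To rule out the axes, suppose $w=(a,0,0)$ with $a>0$ lies in $\Omega$ on the axis $L_1=\{x_2=x_3=0\}$. The idea is to project the dynamics onto the two \emph{vanishing} coordinates: writing $c^P=x_1^{P_1}\bigl(x_2^{P_2}x_3^{P_3}\bigr)$ and absorbing the factor $x_1(t)^{P_1}$ into the rate constant, the evolution of $(x_2,x_3)$ is governed by a two-species $\kappa$-variable mass-action system whose underlying network is the image $\pi(\mathcal R)$ of $\mathcal R$ under the projection forgetting the first coordinate. The key structural observation is that $\pi(\mathcal R)$ is again weakly reversible: each strongly connected linkage class maps onto a strongly connected graph once one deletes the self-loops created when a reaction vector is vertical (and discards any linkage class that collapses to a point), so $\pi(\mathcal R)$ is endotactic and in particular lower-endotactic. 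Because the original trajectory is bounded, so is $(x_2(t),x_3(t))$, and Corollary \ref{cor:boundedPers} then forces $\liminf_t\min(x_2(t),x_3(t))>0$, contradicting $x_2(t_n),x_3(t_n)\to 0$ along a sequence with $c(t_n)\to w$.

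The step that requires the most care---and which I expect to be the main obstacle---is verifying that this projected system is a \emph{legitimate} $\kappa$-variable system, i.e.\ that the new rate constants $\kappa_{P\to P'}(t)\,x_1(t)^{P_1}$ take values in a fixed compact subset of $(0,\infty)$. This forces the spectator coordinate $x_1$ to be bounded away from zero along the relevant times. Here I would exploit the global structure of $\Omega$: having already discarded the origin and the interiors of the coordinate planes, any boundary point of $\Omega$ with $x_1=0$ must lie on another open axis, and the connectedness and invariance of $\Omega$, together with the fact that the complex-balanced Lyapunov function is constant on $\Omega$, restrict how $\Omega$ can meet several axes simultaneously. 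Converting this into a clean lower bound on $x_1$---so that the reduction to Corollary \ref{cor:boundedPers} is valid---is the delicate point; once it is in place the three axes are treated symmetrically and the proof is complete.
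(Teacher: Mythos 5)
Your reduction to excluding boundary $\omega$-limit points via \cite{sase}, and your elimination of the vertex and of the relative interiors of the coordinate planes via \cite{opt, sapte} and \cite{noua}, are sound; the paper uses the same first reduction and the same consequences of the Lyapunov function (boundedness and a lower bound $x+y+z>3\epsilon$). The genuine gap is exactly the step you flag as ``delicate,'' and it is not a technicality but the crux. To apply Corollary \ref{cor:boundedPers} to the projected $(x_2,x_3)$-system you must know that the absorbed factor $x_1(t)^{P_1}$ stays in a fixed interval $(\eta,1/\eta)$ for all large $t$, i.e.\ that $\liminf_{t}x_1(t)>0$. Having excluded the origin and the open facets, the only way $x_1$ can fail to be bounded below is if $\Omega$ meets the open $x_2$- or $x_3$-axis --- which is precisely the statement you are trying to prove for those axes. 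The three axes therefore cannot be treated independently or ``symmetrically'': each axis reduction presupposes the conclusion for the other two, and neither connectedness of $\Omega$ nor constancy of the Lyapunov function on $\Omega$ obviously breaks this circle (for instance, $\Omega$ could meet two different open axes and still be connected through interior points).

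The paper escapes the circularity by never invoking a global-in-time two-species persistence statement for a projected system. It builds a single compact set ${\cal K}=\{(x,y,z)\in[0,1/\epsilon]^3\mid (x,y)\in conv({\cal P}_{xy}),\ (y,z)\in conv({\cal P}_{yz}),\ (z,x)\in conv({\cal P}_{zx})\}$ from three two-species invariant polygons and verifies forward invariance by a \emph{pointwise} sub-tangentiality check: at any contact of the trajectory with $\partial\cal K$, the inequality $x+y+z>3\epsilon$ forces one coordinate, say $z(t_0)$, to exceed $\epsilon$; the geometry of $\cal K$ then forces $(x(t_0),y(t_0))\in{\cal P}_{xy}$ and the effective rates $\kappa_{P\to P'}z(t_0)^{P_Z}$ to lie in $(\eta,1/\eta)$ \emph{at that instant}, which is all that Remark \ref{remarca} requires. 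To salvage your stratified approach you would need to replace the appeal to Corollary \ref{cor:boundedPers} by such an instantaneous boundary condition; as written, the proposal does not prove the theorem.
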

\begin{proof} 
Let $(\cal S,\cal C, \cal R, \kappa)$ denote a complex-balanced reaction system with ordered set of species $\{X, Y, Z\}$ and denote the corresponding concentration vector at time $t$ by $c(t)=(x(t), y(t), z(t)).$ Let $c_0\in\mathbb R_{>0}^3$ be arbitrary and let  $T(c_0)$ denote the forward trajectory of the concentration vector with initial condition $c_0.$  We want to prove that $T(c_0)$ converges to the unique positive equilibrium in the stoichiometric compatibility class of $c_0.$

The existence of the Lyapunov function guarantees, on one hand, that there is a neighborhood of the origin that is not visited by $T(c_0),$ and on the other hand, that $T(c_0)$ is bounded \cite{opt, sapte}. In other words, there exists $\epsilon>0$ such that $x(t)+y(t)+z(t)>3\epsilon$ and $x(t),\ y(t)$ and $z(t)$ are all smaller than $1/\epsilon$ at all times $t\ge 0.$

Our previous discussion explained that it is enough to rule out boundary $\omega$-limit points for $T(c_0).$
We will construct a compact set ${\cal K}\subset \mathbb R_{>0}^3$ such that $T(c_0)\subset \cal K.$

If $\pi_{xy}$ denotes the projection onto the coordinate coordinate plane $xy,$ then
\begin{equation}\label{projform}
\left[
\begin{matrix}
\dot x(t)\\
\dot y(t)
\end{matrix}
\right]
=\sum_{P\to P'\in{\cal R}}\kappa_{P\to P'}z(t)^{P_{Z}}(x(t), y(t))^{\pi_{xy}(P)}\pi_{xy}(P'-P).
\end{equation}
\noindent where $P_Z$ denotes the stoichiometric coefficient of the species $Z$ in $P$ and $\pi_{xy}(P)$ is the complex with species $\{X,Y\}$ obtained by removing species $Z$ from $P.$

Let 
$\kappa_{min} = \min\{\kappa, 1/\kappa\mid \kappa \text{ is a rate constant of some reaction in } \cal R\}$ 
and let $s_{max}$ denote the maximum stoichiometric coefficient of any species in a complex of $\cal R.$
Let
\begin{equation}\label{etagac}
\eta = \kappa_{min}\epsilon^{s_{max}}
\end{equation}
and note that $\eta<1.$

The reactions of $\cal R$ project onto reactions among complexes $\pi_{xy}(\cal C)$ with species $X$ and $Y.$ Let $\pi_{xy}(\cal R)$ denote the resulting reaction network. 
Motivated by (\ref{projform}) we consider the $\kappa$-variable mass-action system of variables $x$ and $y$ given by
\begin{equation}\label{projj}
\left[
\begin{matrix}
\dot x(t)\\
\dot y(t)
\end{matrix}
\right]
=\sum_{P\to P'\in{\pi_{xy}({\cal R})}}\kappa_{P\to P'}(t)(x(t), y(t))^{P}(P'-P).
\end{equation}
\noindent where $\kappa(t)\in(\eta, 1/\eta)$ for all $t\ge 0.$  
Since $\cal R$ is weakly reversible,  $\pi_{xy}(\cal R)$ is weakly reversible as well. In particular, $\pi_{xy}(\cal R)$ is endotactic and we may construct an invariant polygon ${\cal P}_{xy}$ for (\ref{projj}) as in the proof of Theorem \ref{thm:main}. Section \ref{sec:geom} contains the details of the construction and we use its notations in what follows. 

In Section \ref{sec:geom} the vertices of $\cal P$ lie on the ``fractional indices" curves to make ${\cal P}={\cal P}(\alpha)$ vary continuously with $\alpha.$ As noted in Remark \ref{rem:frac}, this constraint is not necessary for proving that ${\cal P}$ is an invariant set for the corresponding $\kappa$-variable mass-action kinetics. For the purpose of this section we will instead construct ${\cal P}={\cal P}_{xy}$ such that $A_1D_{f+1}$ is a vertical line and that its distance to the $y$ axis is equal to the distance from the horizontal line  $A_{e+1}B_1$ to the $x$ axis; let $d>0$ denote this distance. Since ${\cal P}_{xy}$ may be chosen to contain any compact subset of $\mathbb R_{\ge 0}^3$  (see (\ref{covers})), we assume that it satisfies (using the notations from Figure \ref{Fig:bigPicture}): 
\begin{equation}\label{conditie}
\text{the square } [\epsilon, 1/\epsilon]^2 \text{ is included in the square } [\xi, M]^2.
\end{equation}

We construct in the same way ${\cal P}_{yz}$  and ${\cal P}_{zx}$ and we define
\begin{equation}
{\cal K} = \{(x,y,z)\in [0, 1/\epsilon]^3 |\ (x,y)\in \text{conv}({\cal P}_{xy}), (y,z)\in \text{conv}({\cal P}_{yz}), (z,x)\in \text{conv}({\cal P}_{zx})\}.
\end{equation} 

Note that $\cal K$ is a compact subset of $\mathbb R_{>0}^3$ and $c_0\in\cal K.$ We show that the trajectory $T(c_0)$ is included in $\cal K.$ To this end, suppose that at some time point $t_0$ the trajectory reaches the boundary of $\cal K.$ It is then enough to check that
\begin{equation}\label{toprovegac}
{\bf n}\cdot \dot c(t_0)\ge 0
\end{equation}
where $-{\bf n}\in N_{\cal K}(c(t_0))$ is any generator of the normal cone of $\cal K$ at $c(t_0).$

\begin{figure}
\begin{center}
\begin{tikzpicture}[>=stealth', scale = 1] 

\draw[->,thin] (0,2.83)--(0,3.2);
\draw[->,thin] (2.83,0)--(3.2,0);
\draw[->,thin] (-1.47,-1.47)--(-1.65,-1.65);

\begin{scope}
\clip (.2,2.8)--(2.8,2.8)--(2.8,.2).. controls (2.79, -.05) .. (2.7,-.15)--(1.4,-1.45)--(-1.2,-1.45)  .. controls (-1.47, -1.47) .. (-1.55,-1.3)--(-1.55,1.25)--(-.15,2.65) .. controls (-.05,2.8) .. (.2,2.8);
\shadedraw[inner color = blue!100!green!13,outer color=white,draw=white] (-3,-3) rectangle +(7,7);
\end{scope}

\draw[color = gray!10, fill=blue!12!white,thin] (.2,.2)--(.2,2.8) .. controls (-.05,2.8).. (-.15,2.65)--(-.15,.1);
\draw[color = gray!10, fill=blue!12!white,thin] (2.8,.2) .. controls (2.79, -.05).. (2.7,-.15)--(.1,-.15)--(.2,.2);
\draw[color = gray!10, fill=blue!12!white,thin] (-1.2,-1.45) .. controls (-1.47, -1.47).. (-1.55,-1.3)--(-.15,.1)--(.1, -.15);
\draw[color = blue!35!white, thin] (.2,.2)--(.2,2.8);
\draw[color = blue!35!white,thin] (.2,.2)--(2.8,.2);
\draw[color = blue!35!white,thin] (.1,-.15)--(2.7,-.15);
\draw[color = blue!35!white,thin] (.1,-.15)--(-1.2,-1.45);
\draw[color = blue!35!white,thin] (-.15,.1)--(-.15,2.65);
\draw[color = blue!35!white,thin] (-.15,.1)--(-1.55,-1.3);
\draw[color = blue!35!white] (.2,2.8) .. controls (-.05,2.8) .. (-.15,2.65);
\draw[color = blue!35!white] (2.8,.2) .. controls (2.79, -.05) .. (2.7,-.15);
\draw[color = blue!35!white] (-1.2,-1.45) .. controls (-1.47, -1.47) .. (-1.55,-1.3);

\draw [color = blue!35!white,fill=blue!25!white](.2,.8)--(.8,.2)..controls(.92,.05)..(.65,-.15)--(-.25,-.5)..controls(-.65,-.6)..(-.6,-.35)--(-.15,.65)..controls (.02,.95)..(.2,.8);

\draw[color = gray!60!white,thin](.92,.2)--(.92,.95)--(.2,.95);
\fill (.92,.95) circle (.05);
\scriptsize
\node at (1.1,1.15) {$(d, 3\epsilon, 3\epsilon)$};

\draw[color = gray!60!white,thin](2.8,.2)--(2.8,2.8)--(.2,2.8);
\fill (2.8,2.8) circle (.05);
\scriptsize
\node at (3,3) {$(d, 1/\epsilon, 1/\epsilon)$};
\end{tikzpicture}
\end{center}
\caption{The subset of $\partial\cal K$ that can be reached by $T(c_0).$ The phase point $c(t_0)$ may belong to one of the three flat parts where one of the coordinate is $d$, or on one of the cylindrical parts along the axes.}\label{Fig:bdK}
\end{figure}
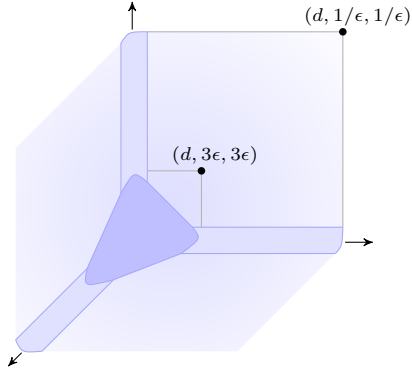

Let ${\cal L}_{xy}$ denote the subset of ${\cal P}_{xy},$ given by the polygonal line $D_{f+1}A_1A_2\ldots A_{e+1}B_1$ (see figure \ref{Fig:bigPicture}) and similarly define ${\cal L}_{yz}$ and ${\cal L}_{zx}.$ Because $c(t)\in(0,1/\epsilon)^3$ for all $t\ge 0,$ (\ref{conditie}) implies
\begin{equation}\label{eq:onboundary}
c(t_0)\in [0,1/\eta]^3 \cap \big(({\cal L}_{xy}\times \mathbb R_{>0})
\cup ({\cal L}_{yz}\times \mathbb R_{>0})\cup ({\cal L}_{zx}\times \mathbb R_{>0})\big)
\end{equation}

Since $x(t_0)+y(t_0)+z(t_0)>3\epsilon$, at least one of $x(t_0), y(t_0)$ and $z(t_0)$ is larger than $\epsilon$. Assume $z(t_0)>\epsilon$. Then (\ref{eq:onboundary}) implies  
that
\begin{equation}\label{onpxy}
c(t_0)\in {{\cal P}_{xy}}\times \mathbb R_{>0}, 
\end{equation}
and therefore the $z$ coordinate of $\bf n$ is zero. It follows that inequality (\ref{toprovegac}) is equivalent to 
\begin{equation}\label{gatagac}
\pi_{xy}({\bf n})\cdot \left(\sum_{P\to P'\in {\cal R}}\kappa_{P\to P'}z(t_0)^{P_Z}(x(t_0), y(t_0))^{\pi_{xy}(P)} (\pi_{xy}(P')-\pi_{xy}(P))\right)\ge 0.
\end{equation}
where $P_{Z}$ denotes the stoichiometric coefficient of the species $Z$ in $P$.
Note that $-\pi_{xy}({\bf n})$ is a generator of the normal cone of ${\cal P}_{xy}$ at $(x(t_0),y(t_0)).$ Our choice of $\eta$ (\ref{etagac}) implies that $\kappa_{P\to P'}z(t_0)^{P_Z}\in(\eta, 1/\eta)$ and (\ref{gatagac}) is implied by Remark \ref{remarca}. 
\end{proof}

\section{Examples}

\subsection{Thomas-type models} The Thomas mechanism (see \cite[ch. 6]{murray}) is a substrate inhibition model for a specific reaction involving oxygen and uric acid in the presence of the enzyme uricase. After nondimensionalization the ODEs for oxygen $(v)$ and  uric acid $(u)$ become
\begin{align}\label{thomas}
&\frac{du}{dt} = a-u-T(u,v)uv,\\
&\frac{dv}{dt} = \alpha(b-v)-T(u,v)uv, \nonumber
\end{align}
\noindent where in the Thomas model $T(u,v)=\rho(1+u+Ku^2)^{-1}.$ Here all parameters $a,b,\alpha,\rho$ and $K$ are positive. Using Theorem \ref{thm:perm} we can show that (\ref{thomas}) is permanent for any function $T(u,v)\geq 0$ that is continuous and does not vanish on a compact $K\supset[0,a]\times[0,b]$ (in particular, this is true for function $T$ in the Thomas mechanism). Indeed, for some time point $t_0$ we have $(u(t),v(t))\in K;$  continuity of $T$ guarantees that for $t\geq t_0$ we have $\eta < T(u(t),v(t))<1/\eta$ for some $\eta>0.$ The dynamical system (\ref{thomas}) can be written as a $\kappa$-variable mass-action system with reactions given in Figure \ref{Fig:examples} (a), where the reaction rates are specified on the reaction arrows and $T(t)=T(u(t),v(t)).$ This network is endotactic and Theorem \ref{thm:perm} implies that the dynamical system (\ref{thomas}) is permanent. 

\subsection{Power-law systems} In the proof of Theorem \ref{thm:main} we have not used the fact that the exponents $P$ of monomials $c^P$ in (\ref{varMassAct}) are nonnegative integers; this is the case when $P\subset \mathbb Z_{\ge 0}^2$ represents a chemical complex, which has been our framework thus far. In fact, the proof of Theorem \ref{thm:main} accommodates the case when $P\subset \mathbb R^2$ has any real components. 

More precisely, keeping the notation $c(t)=(x(t),y(t)),$ we consider an ODE system of the following form:
\begin{equation}\label{realPower}
\dot c(t) = \sum_{P\in {\cal {SC}}}\sum_{{\bf v}\in V(P)} \kappa_{P,{\bf v}}(t) c^{P}{\bf v}
\end{equation}
\noindent where ${\cal{ SC}}\subset \mathbb R^2$ is the set of  ``sources" $P$ and $V(P)$ is the set of ``reaction vectors" with source $P.$ We also suppose that $\kappa_{P,{\bf v}}(t)\in(\eta,1/\eta)$ for some $0<\eta<1.$   
With this interpretation, (\ref{realPower}) is a generalization of a $\kappa$-variable mass-action ODE system (\ref{varMassAct}) and Theorems \ref{thm:main} and \ref{thm:perm} still apply.

A particularly important example of power-law, not necessarily polynomial systems is the class of {\em S-systems} (\cite{Savageau}), where each component of the right hand side of (\ref{realPower}) consists of a difference of two ``generalized monomials" (i.e., monomials with real exponents). S-systems are common in the modeling of metabolic and genetic networks. For example, consider the following S-system:
\begin{align}\label{ex:S}
&\frac{dx}{dt} = 2x^{-1}y^{1.5} - y^{0.8}\\
&\frac{dy}{dt} = y^{-2} - \sqrt{5} x^{-1}y^{1.5}\nonumber
\end{align}
Note that it is not obvious that trajectories of (\ref{ex:S}) cannot reach $\partial \mathbb R_{\ge 0}^2$ in finite time. However, using Theorem \ref{thm:perm} we can easily see that (\ref{ex:S}) is in fact permanent, and, in particular, all trajectories with positive initial condition will be bounded away from zero and infinity. Indeed, the generalized monomials in (\ref{ex:S}), i.e. the points $(-1,1.5),(0,0.8)$ and $(0,-2),$ as well as the corresponding ``reaction vectors" $(2,-\sqrt{5}), (-1,0)$ and $(0,1)$ are illustrated in Figure \ref{Fig:examples} (b). This configuration is endotactic and Theorem \ref{thm:perm} applies.

Note that the same conclusion holds for any system of the form 
\begin{align}\label{ex:Svar}
&\frac{dx}{dt} = 2\kappa_1(t)x^{-1}y^{1.5} - \kappa_2(t)y^{0.8}\\
&\frac{dy}{dt} = \kappa_3(t)y^{-2} - \sqrt{5} \kappa_1(t)x^{-1}y^{1.5}\nonumber
\end{align}
\noindent where $\kappa_i(t)\in(\eta,1/\eta)$ for all $i\in\{1,2,3\}$ and for all $t\ge 0.$

%
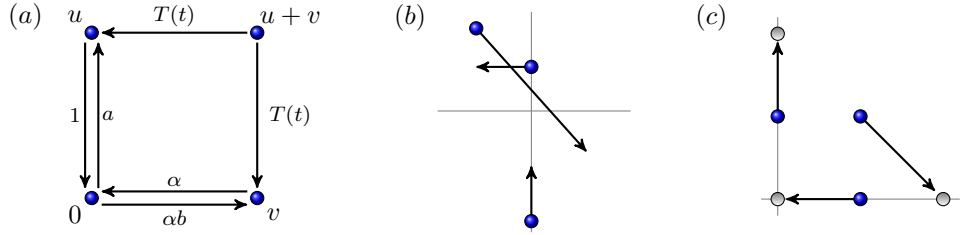
\begin{figure}
\begin{center}
\begin{tabular}{ccc}
\begin{tikzpicture}[>=stealth', scale = .22] 
\node at (-4,11) {$(a)$};
\node at (-1,-1) {$0$};
\node at (-1,11) {$u$};
\node at (12,11) {$u+v$};
\node at (11,-1) {$v$};

\footnotesize
\node at (5, -1.2) {${\alpha b}$};
\node at (5, 1) {$\alpha$};
\node at (1,5) {$a$};
\node at (-1,5) {$1$};
\node at (12,5) {$T(t)$};
\node at (5,11) {$T(t)$};

\draw[->,thick] (.6,-.4)--(9.4,-.4);
\draw[<-,thick] (.6,.4)--(9.4,.4);
\draw[->,thick] (.4,.6)--(.4,9.4);
\draw[<-,thick] (-.4,.6)--(-.4,9.4);
\draw[<-,thick] (10,.6)--(10,9.4);
\draw[<-,thick] (.6,10)--(9.4,10);

\shadedraw [shading=ball] (0,0) circle (.4cm);
\shadedraw [shading=ball] (0,10) circle (.4cm); 
\shadedraw [shading=ball] (10,0) circle (.4cm);
\shadedraw [shading=ball] (10,10) circle (.4cm); 
\end{tikzpicture} 
&
\hspace{.2cm}
\begin{tikzpicture}[>=stealth', scale = .733] 
\node at (-2.2,1.7) {$(b)$};
\draw[gray](0,1.8)--(0,-2.2); 
\draw[gray](-1.7,0)--(1.8,0);
\draw[->, thick] (0,.8)--(-1,.8);
\draw[->,thick] (0,-2)--(0,-1);
\draw[->,thick] (-1,1.5)--(1,-0.736);

\shadedraw [shading=ball] (-1,1.5) circle (.12cm);
\shadedraw [shading=ball] (0,.8) circle (.12cm); 
\shadedraw [shading=ball] (0,-2) circle (.12cm);
\end{tikzpicture} 
\hspace{.2cm}
&
\begin{tikzpicture}[>=stealth', scale = .22] 
\node at (-4,11) {$(c)$};
\draw[gray](0,11)--(0,-1);
\draw[gray](-1,0)--(11,0);
\draw[->,thick] (0,5)--(0,9.5);
\draw[->,thick] (5,0)--(0.5,0);
\draw[->,thick] (5,5)--(9.5,.5);
\shadedraw [] (0,0) circle (.4cm);
\shadedraw [] (0,10) circle (.4cm); 
\shadedraw [] (10,0) circle (.4cm);
\shadedraw [shading=ball] (0,5) circle (.4cm); 
\shadedraw [shading=ball] (5,0) circle (.4cm); 
\shadedraw [shading=ball] (5,5) circle (.4cm); 
\node at (11,-1.5) {$$};
\end{tikzpicture} 
\end{tabular}
\end{center}
\caption{Examples of reaction networks: (a)--Thomas model (\ref{thomas}); (b)--the S-system (\ref{ex:S}); (c)--Lotka-Volterra system (\ref{ex:LV}).}\label{Fig:examples}
\end{figure}
\subsection{Lotka-Volterra systems} The classical two-species predator-prey model 
\begin{equation}\label{ex:LV}
A\to 2A\quad A+B\to 2B\quad B\to 0
\end{equation}
is neither endotactic nor lower endotactic. Since, {\em for fixed parameters} its trajectories are either constant or closed orbits, the system is not permanent (see Figure \ref{Fig:examples} (c)). On the other hand, the fixed-parameter system has bounded trajectories and is persistent, which seems to suggest that the requirement of entotactic network may be weakened in  Corollary \ref{cor:boundedPers}. Note, however, that the result of the corollary concerns $\kappa$-variable mass-action systems. It is not hard to show that, in general, the  {\em $\kappa$-variable }  Lotka-Volterra system (\ref{ex:LV}) is {\em not} persistent.

\subsection{Examples for the three-species Global Attractor Conjecture} 
We present two examples of reaction networks for which no previously known results 
can resolve the question of global asymptotic stability, but for which Theorem \ref{gac3sp} applies. 
\begin{equation}\label{ex:gac}
\begin{tikzpicture}[very thin, scale = 1.1] 
\node at (-6.7,.85) {$(a)\ A\rightleftharpoons B\rightleftharpoons A+B\rightleftharpoons A+C$};
\node at (-2.3,.85) {$(b)\ A+B\rightleftharpoons A+C$};
\node at (0,-.05) {$2A$};
\node at (-.5,.88) {$A$};
\node at (.5,.88) {$B$};
\draw[semithick,->] (-.35,.88)--(.35,.88);
\draw[semithick,->] (-0.1,0.1)--(-.43,.7);
\draw[semithick ,<-] (0.1,0.1)--(.43,.7);
\end{tikzpicture} 
\end{equation}
The first example, given by (\ref{ex:gac}) (a) is Example 5.4 from \cite{noua}; it is reversible with one linkage class. On the other hand, example (\ref{ex:gac}) (b) is weakly-reversible, but not reversible and has two linkage classes. Moreover, the deficiency of both networks is zero and therefore the two mass-action systems are complex-balanced.

In both examples all stoichiometric compatibility classes are equal to $\mathbb R_{\ge 0}^3.$ If $(x(t),y(t),z(t))$ denotes the concentrations of $(A, B, C),$ then it is easy to see that all points on the nonnegative $z$ axis are equilibria for both examples. Therefore boundary equilibria exist, and moreover, except for the origin, they all lie on a codimension-two face of $\mathbb R_{\ge0}^3.$ In this case all previously  known results stay silent, but Theorem \ref{gac3sp} guarantees that, in each example, the unique positive equilibrium is globally asymptotically stable.


\end{document}